\theoremstyle{plain}
\newtheorem{theorem}{Theorem}[section]
\newtheorem{lemma}[theorem]{Lemma}
\newtheorem{corollary}[theorem]{Corollary}
\theoremstyle{definition}
\theoremstyle{remark}
\newcounter{statement}
\newenvironment{statement}
{
	
	\setcounter{statement}
	{\value{equation}}
	\begin{list}
		{(\theequation)}
		{
		}
		\usecounter{equation}
		\setcounter{equation}
		{\value{statement}}
		
		\item 
	}
	{\end{list}}
\title{On the spectral radius of the $(L,\kappa)$-lazy Markov chain}
\author[L. Qian]{Li Qian} 
\address[L. Qian]{School of Mathematical Sciences \\ Peking University \\ Beijing 100871 \\ China}
\email{ql1995@pku.edu.cn}
\author[Z. Sun]{Zhenyao Sun}
\address[Z. Sun]{School of Mathematics and Statistics \\ Beijing Institute of Technology \\ Beijing 100081 \\ China}
\email{zhenyao.sun@gmail.com}
\subjclass[2020]{60J10, 37A30} 
\keywords{lazy Markov chain, spectral radius, rho-recurrent/transient, phase transition}
\begin{document}
	\begin{abstract}
		We consider an $(L,\kappa)$-lazy operation on an irreducible Markov transition
		probability $P$ with state space $S$ where $L \subset S$ and $\kappa\in[0,1)$. For
		each $x \in L$ and $y\in S$, this $(L,\kappa)$-operation replaces $P(x,y)$, the
		transition probability from $x$ to $y$, by $\kappa 1_{\{x=y\}} + (1-\kappa)P(x,y)$. We
		are interested in how $L$ and $\kappa$ influence the spectral radius $\rho^L_\kappa$
		of this new transition probability. We first show that $\rho^L_\kappa$ is non-decreasing
		and continuous in $\kappa$. We then show that: (1) If $L$ is nonempty and finite, then
		$P$ being rho-transient is equivalent to that the growth of $\displaystyle
		(\rho^L_\kappa)_{\kappa\in[0,1)}$ exhibits a phase transition: There exists a critical
		value $\kappa_c(L) \in (0,1)$ such that $\kappa \mapsto \rho^L_\kappa$ is a constant
		on $[0,\kappa_c(L)]$ and increases strictly on $[\kappa_c(L),1)$; (2) For every
		$\kappa\in(0,1)$, if $S\setminus L$ is nonempty and finite, then
		$\rho^L_\kappa=\rho^S_\kappa$ if and only if $P$ is not strictly rho-recurrent.
	\end{abstract}
	
	\maketitle	
	
	\section{Introduction}
	We consider irreducible discrete-time Markov chains	on a countable infinite state space $S$.
	In recent years, there are some papers studied the $\kappa$-lazy version of a Markov transition probability $P$ on $S$,
	which is defined as $\kappa I + (1-\kappa)P$ where $I$ is the identity matrix (see \cites{FW21,LP17,MT06,H16}).
	Intuitively speaking, the corresponding $\kappa$-lazy Markov chain is the original chain being delayed at each step by tossing a coin with heads' probability $\kappa$: 
	If the chain gets a head, then it won't move, and it gets a tail, it will move according to the transition probability $P$.
	
    In this note, we are going to consider the $(L,\kappa)$-laziness which generalized the notion of the $\kappa$-laziness by only delaying the Markov chain on a subset of $S$. 
	To be more precise, for any $L\subset S$ and $\kappa \in [0,1)$, we denote by $P^L_\kappa$ the	
	\emph{$(L,\kappa)$-lazy version} of a transition probability $P$ which is defined so that for any $x,y \in S,$ 
	\begin{align}
		P^L_{\kappa}(x,y) :=
		\left\{
		\begin{aligned}
			& \kappa + (1-\kappa) P(x,x), && \text{ if } x \in L \text{ and } x= y;
			\\ & (1-\kappa) P(x,y), && \text{ if } x \in L \text{ and } x\neq y;
			\\ & P(x,y), && \text{ if } x\notin L.
		\end{aligned}
		\right.
	\end{align}
	We call $P (=P^L_0)$ the \emph {underlying transition probability}.
	
	We are particularly interested in how the spectral radius of the transition probability $P^L_\kappa$ is influenced by $L$ and $\kappa$.
	The \emph{Green function} $G(x,y|z)$ of a transition probability $P$ is defined as 
	\begin{align}
		G(x,y|z) := \sum_{n=0}^\infty \mathbb P^x(X_n=y) z^n, \quad x,y\in S, z\in \mathbb R,
	\end{align}
	where $\displaystyle (X_n)^\infty_{n=0}$ is a Markov chain with transition probability $P$ and initial value $X_0=x$ under the probability $\mathbb P^x$.
	It is proved in \cite{W00}*{Lemma 1.7} that if $P$ is irreducible,
	then the convergence radius of the Green function $G(x,y|\cdot)$ is independent of $x$ and $y$.
	The reciprocal of this convergence radius is known 
	as the \emph{spectral radius} of 
	$P$ (see \cite{W00} for example).
	It is noted in \cite{W00}*{(1.8)} that
	\begin{statement}\label{:rho<=1}
		the spectral radius of an irreducible transition probability takes its value in $(0,1]$.
	\end{statement}
	Observe that for $\kappa\in[0,1)$, if $P$ is irreducible, then so is $P^L_\kappa$.
	
	In the rest of this note, we will always assume that the underlying transition probability $P$ is irreducible, and we will denote by $\rho^L_\kappa$ the spectral radius of $P^L_\kappa$ with $\rho:= \rho^L_0$. 
	One can iterate the lazy operations on the underlying transition probability. 
	For example, we denote by $(P^L_\kappa)^{L'}_{\kappa'}$ the $(L',\kappa')$-lazy version of the $(L,\kappa)$-lazy version of $P$, and by $(\rho^L_\kappa)^{L'}_{\kappa'}$ the spectral radius of $(P^L_\kappa)^{L'}_{\kappa'}$.

	The spectral radius $\rho^S_\kappa$ for the $\kappa$-lazy version of $P$ has been studied in \cite{W00}*{Lemma 9.2}: For $\kappa\in[0,1)$, it holds that 
	\begin{align}\label{:glLi}
		\rho^S_\kappa = \kappa + (1-\kappa) \rho.
	\end{align}
	Our first result concerns the monotonicity and the continuity of $\kappa\mapsto \rho_\kappa^L$ for general $L \subset S$.
	\begin{theorem}\label{:gen}
		Let $L\subset S$ and $\kappa\in[0,1)$.
		\begin{enumerate}[(i)]
			\item \label{:gen:monotonicity}
			$\rho^L_{\kappa}$ is non-decreasing in $\kappa$.
			\item \label{:gen:continuity}
			$\rho^L_{\kappa}$ is continuous in $\kappa$.
			\item \label{:gen:1}
			$\rho=1 \Leftrightarrow \rho^L_\kappa=1$, or equivalently speaking, $\rho<1 \Leftrightarrow \rho^L_\kappa<1$.
			\item \label{:gen:lim1}
			If $L\neq \varnothing$, then $\lim_{\kappa \uparrow 1} \rho^L_\kappa = 1$.
		\end{enumerate}
	\end{theorem}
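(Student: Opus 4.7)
The engine of the whole theorem will be the \emph{iteration identity} $(P^L_\kappa)^L_\eta = P^L_{\kappa + (1-\kappa)\eta}$, verified by a single line of arithmetic on each of the three cases in the definition of $P^L_\kappa$. This identity reduces every statement about the curve $\kappa \mapsto \rho^L_\kappa$ to a single input--output inequality for one application of the $(L,\eta)$-lazy operation to an arbitrary irreducible transition probability $Q$.

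The core technical step will be the sandwich
\[
\rho(Q) \ \leq\ \rho(Q^L_\eta) \ \leq\ \eta + (1-\eta)\rho(Q), \qquad \eta \in [0,1),
\]
proved from the variational characterization $\rho(Q) = \inf\{r > 0 : \exists\,\phi : S \to (0,\infty),\ Q\phi \leq r\phi\}$; existence of a strictly positive superharmonic $\phi$ for every $r > \rho(Q)$ will be supplied by $\phi(x) = G^Q(x,x_0 \mid 1/r)$, finite and positive by irreducibility. For the upper inequality, $Q\phi \leq r\phi$ gives $Q^L_\eta \phi \leq (\eta + (1-\eta)r)\phi$ by a direct split according to whether the row is in $L$. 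For the lower inequality, $Q^L_\eta \phi \leq r\phi$ yields $Q\phi \leq \tfrac{r-\eta}{1-\eta}\phi$ on rows $x \in L$, and since \eqref{:rho<=1} forces the relevant $r$ to lie in $[\eta,1]$, this ratio is at most $r$; on $L^c$ the inequality $Q\phi \leq r\phi$ is tautological.

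With the sandwich in hand, parts (i)--(iii) will all come out at once. Applying it to $Q = P^L_\kappa$ with $\eta = (\kappa'-\kappa)/(1-\kappa)$, the iteration identity gives $Q^L_\eta = P^L_{\kappa'}$, and the sandwich reads
\[
\rho^L_\kappa \ \leq\ \rho^L_{\kappa'} \ \leq\ \eta + (1-\eta)\rho^L_\kappa,
\]
from which (i) is immediate and right-continuity of $\kappa \mapsto \rho^L_\kappa$ follows by squeezing $\eta \downarrow 0$; left-continuity is the same sandwich centred at a neighbour $\kappa' < \kappa$. Part (iii) is the sandwich at $Q = P$ combined with \eqref{:rho<=1}: $\rho = 1$ forces $\rho^L_\kappa = 1$ from the lower bound, and $\rho^L_\kappa = 1$ forces $\rho \geq 1$ from the upper bound.

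Part (iv) sits apart but is the easiest step. For any fixed $x \in L$, the ``all-lazy'' path at $x$ gives $(P^L_\kappa)^n(x,x) \geq P^L_\kappa(x,x)^n = (\kappa + (1-\kappa)P(x,x))^n$, so $\rho^L_\kappa \geq \kappa + (1-\kappa)P(x,x) \to 1$ as $\kappa \uparrow 1$; \eqref{:rho<=1} then pins the limit at $1$. The delicate point of the whole proof is really the careful deployment of the variational formula on a countably infinite state space --- verifying that strictly positive superharmonic functions exist for every $r > \rho(Q)$ without any $\rho$-recurrence assumption on $Q$ --- which the Green-function construction handles but must be invoked with care.
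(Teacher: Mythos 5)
Your proposal is correct, but it follows a genuinely different route from the paper. The paper proves (i) by taking a reference state $x\in S\setminus L$ and comparing the excursion expansions of $U^L_\kappa(x,x|z)$ and $U^L_{\kappa'}(x,x|z)$ from \cref{:P:UkappaExpansion} \eqref{:P:UkappaExpansion:general} (the factor $\bigl(\tfrac{1-\kappa}{1-\kappa z}\bigr)^{\texttt{\#}\vec l(L)}$ is monotone in $\kappa$ for $z\ge1$), then invokes the characterization $1/\rho=\max\{z:U(x,x|z)\le1\}$ of \cref{:P:GU}; parts (ii) and (iii) are then obtained from iteration identities such as $(P^L_\kappa)^S_{\kappa_1}=(P^L_{\kappa'})^{S\setminus L}_{\kappa_1}$ together with the exact formula \eqref{:glLi} for $L=S$; part (iv) again goes through the U-function. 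You instead bypass the U-function entirely: your two-sided bound $\rho(Q)\le\rho(Q^L_\eta)\le\eta+(1-\eta)\rho(Q)$, proved via positive $r$-superharmonic functions supplied by $\phi=G^Q(\cdot,x_0|1/r)$, plus the iteration identity $(P^L_\kappa)^L_\eta=P^L_{\kappa+(1-\kappa)\eta}$ (which checks out), delivers (i)--(iii) in one stroke, and your diagonal bound $(P^L_\kappa)^n(x,x)\ge(\kappa+(1-\kappa)P(x,x))^n\ge\kappa^n$ gives (iv) more cheaply than the paper's argument. Your upper bound is essentially the paper's ``$\rho^L_\kappa\le\rho^S_\kappa$ plus \eqref{:glLi}'' repackaged, but derived from first principles. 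What your approach buys is self-containedness and uniformity for this theorem; what the paper's buys is that the excursion expansion of \cref{:P:UkappaExpansion} is needed anyway for the later, finer results, so proving \cref{:gen} with it costs nothing extra.

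Two small points to tighten. First, in the upper half of your sandwich, for rows $x\notin L$ you need $r\phi(x)\le(\eta+(1-\eta)r)\phi(x)$, which requires $r\le1$; this is harmless because \eqref{:rho<=1} lets you restrict to $r\in(\rho(Q),1]$ (and the case $\rho(Q)=1$ makes the upper bound vacuous), but you should say so explicitly, just as you did for the constraint $r\in[\eta,1]$ in the lower half. Second, the converse half of the variational characterization (that $Q\phi\le r\phi$ with $\phi>0$ forces $\rho(Q)\le r$, via $Q^n(x,x)\phi(x)\le Q^n\phi(x)\le r^n\phi(x)$ and Cauchy--Hadamard) should be written out, since both halves of your sandwich depend on it; it is standard but is the load-bearing step of your whole argument.
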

	
	Let us now introduce a classification of the transition probability,
	which is crucial for the rest of our results.
	It is proved in \cite{W00}*{Lemma 1.7} that $G(x,y|1/\rho)$ either $=\infty$ 
	(or $<\infty$)  
	simultaneously for all $x,y\in S$, and the corresponding $P$ is referred to as 
	\emph {rho-recurrent} (or \emph{rho-transient}) transition probability. 
	These concepts appeared in \cite{V62} and were studied in \cites{P64,P65,V67,V68,K76} and \cite{N04}*{Section 3.2}.
	One can find specific examples of rho-recurrent/rho-transient Markov chains in \cite{W00}*{Section 7.B}.
	
	\begin{theorem}\label{:IS?}
		Suppose $\rho<1$. 
		\begin{enumerate}[(i)]
			\item \label{:IS?:critical}
			If $0<\texttt{\#} L <\infty$ and $P$ is rho-transient, then there exists a unique $\kappa_{c}(L)\in (0,1)$ such that
			\begin{itemize}
				\item For $\kappa \in[0,\kappa_{c}(L))$,
				$\rho^L_\kappa = \rho$
				and $P^L_\kappa$ is rho-transient;
				\item For $\kappa \in [\kappa_{c}(L),1)$, 
				$\rho^L_\kappa$ increases strictly in $\kappa$ with $\rho^L_{\kappa_{c}(L)} = \rho$, 
				and $P^L_\kappa$ is rho-recurrent.
			\end{itemize}
			\item \label{:IS?:finiteIS}
			If $0<\texttt{\#} L <\infty$ and $P$ is rho-recurrent, then for every $\kappa\in[0,1)$, $\rho^L_\kappa$ increases strictly in $\kappa$ and $P^L_\kappa$ is rho-recurrent.
			\item \label{:IS?:comFiniteIS}
			If $\texttt{\#} (S\setminus L) <\infty$, then $\rho^L_\kappa$ increases strictly in $\kappa\in[0,1)$.
		\end{enumerate}
	\end{theorem}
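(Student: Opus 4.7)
My plan is to express $\rho^L_\kappa$ through the spectral radius of a finite first-visit matrix. For any finite nonempty $A\subset S$ and any irreducible transition probability $Q$ on $S$, set
\[
\Phi^A_Q(z) = \bigl(F^A_Q(x,y|z)\bigr)_{x,y\in A}, \quad F^A_Q(x,y|z) = \sum_{n=1}^\infty z^n \mathbb P_Q^x(\tau^A = n, X_{\tau^A} = y),
\]
where $\tau^A = \inf\{n\geq 1 : X_n \in A\}$. Decomposing each Green function entry by successive visits to $A$ yields $G^Q_A(z) = (I - \Phi^A_Q(z))^{-1}$ on $[0, 1/\rho_Q)$. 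Since $\Phi^A_Q(z)$ is entrywise positive for $z > 0$ by irreducibility of $Q$, Perron--Frobenius makes its spectral radius continuous and non-decreasing in $z$, and $1/\rho_Q$ is the smallest positive $z$ at which either an entry diverges or the spectral radius reaches $1$.

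For parts (i) and (ii) I take $A = L$ and $Q = P^L_\kappa$. A one-step analysis at each $x \in L$ splitting the first move into the lazy coin and the subsequent $P$-step yields
\[
\Phi^L_{P^L_\kappa}(z) = \kappa z\,I + (1-\kappa)\,\Phi^L_P(z),
\]
whose spectral radius equals $\kappa z + (1-\kappa)\lambda(z)$ with $\lambda(z) := \mathrm{spr}(\Phi^L_P(z))$. One checks that $\lambda$ is continuous and non-decreasing on $[0,1/\rho]$, with $\lambda(z)<1$ on $[0,1/\rho)$, $\lambda(1/\rho)=1$ iff $P$ is rho-recurrent, and $\lambda(1)<1$ (since $\rho<1$ makes $G(1)<\infty$). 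Part (ii) follows from the implicit function theorem applied to $\kappa z+(1-\kappa)\lambda(z)=1$: when $\lambda(1/\rho)=1$ the unique solution $z_c(\kappa)$ lies in $(1,1/\rho]$ and is strictly decreasing in $\kappa$, using $z-\lambda(z)>0$ at the solution (from $z>1\geq\lambda(z)$). For part (i), evaluating at $z=1/\rho$ gives $\kappa_c(L) := (1-\lambda(1/\rho))/(1/\rho-\lambda(1/\rho))\in(0,1)$; below $\kappa_c$ the equation has no solution in $(0,1/\rho]$, so combined with $\rho^L_\kappa\geq\rho$ we get $\rho^L_\kappa=\rho$ with $P^L_\kappa$ still rho-transient; at $\kappa_c$ the spectral radius first reaches $1$ precisely at $z=1/\rho$, making $P^L_\kappa$ rho-recurrent; above $\kappa_c$ the argument of part (ii) applies.

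For part (iii) I take $A = L^c$ (the case $L=S$ reduces to \eqref{:glLi}) and $Q = P^L_\kappa$. Decomposing each $P^L_\kappa$-excursion as the underlying $P$-path with independent Geometric$(1-\kappa)$ holdings at every $L$-visit (generating factor $(1-\kappa)/(1-\kappa z)$ per holding) produces the time-change identity
\[
\Phi^{L^c}_{P^L_\kappa}(z) = \frac{1-z\kappa}{1-\kappa}\,\Phi^{L^c}_P(\tilde z), \qquad \tilde z = \frac{z(1-\kappa)}{1-z\kappa}.
\]
Factor $\Phi^{L^c}_P(\tilde z) = \tilde z\,M(\tilde z)$; by irreducibility of $P$, $M(\tilde z)$ is entrywise positive for $\tilde z>0$, and since $L\neq\varnothing$ forces some Taylor coefficient of $\Phi^{L^c}_P$ of order $\geq 2$ to be nonzero, $M(\tilde z)$ is entrywise strictly increasing in $\tilde z$. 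By Perron monotonicity for irreducible non-negative matrices, its spectral radius $m(\tilde z)$ is continuous and strictly increasing. Now suppose for contradiction that $\rho^L_\kappa=c$ on some $[\kappa_1,\kappa_2]$ with $\kappa_1<\kappa_2$; then for all but at most one $\kappa$ in this interval, $u(\kappa) := \tilde z(1/c) = (1-\kappa)/(c-\kappa)$ lies in the domain of $m$ and satisfies $m(u(\kappa))=c$. Since $u$ is continuous and strictly monotone and $m$ is continuous, $m$ would be constant on an interval, contradicting strict monotonicity. Combined with the continuous non-decreasing property from Theorem \ref{:gen}, this proves $\rho^L_\kappa$ is strictly increasing on $[0,1)$.

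The step I expect to be the main obstacle is part (iii): cleanly deriving the time-change identity via the Geometric-holding decomposition, and verifying that strict entrywise monotonicity of $M(\tilde z)$ (which follows from the irreducibility of $P$ and $L\neq\varnothing$) propagates to strict monotonicity of the Perron eigenvalue $m$, which is ultimately what forbids $\rho^L_\kappa$ from being constant on any subinterval of $[0,1)$.
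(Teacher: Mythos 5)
Your proposal is correct in substance but follows a genuinely different route from the paper. The paper never forms a matrix over $L$: for parts (i) and (ii) it works with the scalar identity $U^{\{x\}}_\kappa(x,x|z)=\kappa z+(1-\kappa)U(x,x|z)$ for a \emph{single} lazy state, together with the characterization $1/\rho=\max\{z:U(x,x|z)\le 1\}$ (\cref{:P:GU}, \cref{:P:UkappaExpansion}), and reaches general finite $L$ by writing $P^L_{\kappa_2}$ as an iterated composition of single-point lazy operations and inducting (\cref{:singleCri:3}); the critical value is then produced abstractly as $\sup\{\kappa: P^L_\kappa \text{ rho-transient}\}$ plus a continuity argument. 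Your Perron--Frobenius analysis of $\Phi^L_{P^L_\kappa}(z)=\kappa z I+(1-\kappa)\Phi^L_P(z)$ handles finite $L$ in one stroke and yields an explicit formula $\kappa_c(L)=(1-\lambda(1/\rho))/(1/\rho-\lambda(1/\rho))$ that the paper does not obtain. For part (iii) the paper again avoids matrices, splitting into the rho-recurrent case (one extra lazy point plus \cref{:singleCri:3}) and the rho-transient case (via the conjugation $(P^L_{\kappa_1})^S_\kappa=(P^L_{\kappa_2})^{S\setminus L}_\kappa$ and \cref{:P:globallyUniformlyLazyMaintenance}), whereas your time-change identity for $\Phi^{L^c}$ and the strict growth of the Perron root give a self-contained argument. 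Two misstatements should be repaired, though neither is fatal: $\Phi^A_Q(z)$ need not be entrywise positive (for the nearest-neighbour walk on $\mathbb Z$ with $A=\{0,1,5\}$ one has $F^A_Q(0,5|z)=0$) --- it is only irreducible as a finite nonnegative matrix, which is all your Perron--Frobenius steps actually require; likewise $M(\tilde z)$ need not be \emph{entrywise} strictly increasing, only some entry is, which still forces strict growth of the Perron root of an irreducible matrix. Finally, your characterization of $1/\rho_Q$ rests on the exact identity $G_A(z)=\sum_{k\ge 0}\Phi^A_Q(z)^k$ (decomposition of a path by all of its visits to $A$) and on the finiteness of the off-diagonal entries of $\Phi^L_P$ at $z=1/\rho$; both are routine but should be stated, since they are what rule out the spectral radius of $\Phi^A_Q$ sitting strictly below $1$ at $z=1/\rho_Q$ for reasons other than divergence of an entry.
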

	
	In this note, we will introduce a further classification for the rho-recurrent transition probabilities,
	    which we will use in our next result. 
	Let the \emph{U-function} of a transition probability $P$ be the power series:
	$$U(x,y|z):= \sum_{n=0}^\infty \mathbb P^x(\tau^y=n) z^n, \quad x,y\in S, z\in \mathbb R, $$
	where $\tau^y:= \inf\{ n\ge1: X_n = y\}$.
	Let $r(U|x,y)$ be the convergence radius of $U(x,y|\cdot)$. (Note that $r(U|x,y)$ may depend on $x,y$.)
	We will prove in \cref{:P:rhoTransient} that if $P$ is rho-transient, then
	\begin{align}\label{:rU=1/rho}
		r(U|x,x) = 1/\rho, \forall x\in S. 
	\end{align}
	If $P$ is rho-recurrent and \eqref{:rU=1/rho} holds, then we say $P$ is \emph{critically rho-recurrent}. 
	If $P$ is rho-recurrent, but \eqref{:rU=1/rho} does not hold, then we say $P$ is \emph{strictly rho-recurrent}. 
	
	\begin{theorem}\label{:twoRhoRecurrences}
		Suppose that $\rho<1$ and $0<\texttt{\#} L <\infty$.
		\begin{enumerate}[(i)]
			\item If $P$ is rho-transient, then $P^L_{\kappa_{c}(L)}$ is critically rho-recurrent where $\kappa_{c}(L)$ is given as in \cref{:IS?} \eqref{:IS?:critical}.
			Moreover, for every $\kappa \in (\kappa_{c}(L),1)$, $P^L_\kappa$ is strictly rho-recurrent.
			\item If $P$ is rho-recurrent, then for every $\kappa\in(0,1)$, $P^L_\kappa$ is strictly rho-recurrent.
		\end{enumerate}
	\end{theorem}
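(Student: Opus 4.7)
The plan is to induct on $|L|$ using a clean identity for the first-return generating function when a single state is made lazy. For any irreducible transition probability $Q$ on $S$, any $x \in S$, and any $\kappa \in [0,1)$, first-step conditioning yields
\begin{align*}
U_{Q^{\{x\}}_\kappa}(x,x|z) = \kappa z + (1-\kappa)\, U_Q(x,x|z),
\end{align*}
since with probability $\kappa$ the lazy chain stays and returns at time $1$, while with probability $1-\kappa$ it takes a $Q$-step and thereafter (as only $x$ is lazy) the first hit of $x$ is distributed as under $Q$. In particular $r(U_{Q^{\{x\}}_\kappa}|x,x) = r(U_Q|x,x)$. A direct entry-wise comparison further shows that the single-point lazy operations at distinct states commute, so for any $x_i \in L$,
\begin{align*}
P^L_\kappa = \bigl(P^{L\setminus\{x_i\}}_\kappa\bigr)^{\{x_i\}}_\kappa,
\end{align*}
and iterating \cref{:gen:monotonicity} then also gives the monotonicity $\rho^{L'}_\kappa \le \rho^L_\kappa$ whenever $L' \subset L$.

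The crucial single-point lemma reads: if $Q$ is irreducible and rho-recurrent with $\rho_Q \in (0,1)$, then for any $\kappa \in (0,1)$ and $x \in S$ the chain $R := Q^{\{x\}}_\kappa$ is strictly rho-recurrent with $\rho_R > \rho_Q$. Indeed, rho-recurrence gives $U_Q(x,x|1/\rho_Q) = 1$, so $U_R(x,x|1/\rho_Q) = \kappa/\rho_Q + (1-\kappa) > 1$ since $\rho_Q < 1$; by continuity and strict monotonicity on $[0, r(U_R|x,x))$ there is a unique $z_* \in (0, 1/\rho_Q)$ with $U_R(x,x|z_*) = 1$, which must equal $1/\rho_R$, yielding $\rho_R > \rho_Q$. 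Since $r(U_R|x,x) = r(U_Q|x,x) \ge 1/\rho_Q > z_* = 1/\rho_R$, strict rho-recurrence at $x$ follows, and hence everywhere by the standard fact that $r(U|x,x)$ is independent of $x$ for an irreducible chain.

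I then induct on $|L|$. For the base $|L|=1$, $L=\{x_0\}$: part (ii) is the above lemma applied with $Q=P$; for part (i), the single-point formula gives $r(U^L_\kappa|x_0,x_0) = r(U_P|x_0,x_0) = 1/\rho$ via \eqref{:rU=1/rho}, and comparing with $1/\rho^L_\kappa$ --- equal to $1/\rho$ at $\kappa_c(L)$ and strictly smaller for $\kappa > \kappa_c(L)$ by \cref{:IS?:critical} --- identifies $P^L_\kappa$ as critically rho-recurrent at $\kappa_c(L)$ and strictly rho-recurrent beyond. For the inductive step, set $L' := L\setminus\{x_i\}$ and $Q_i := P^{L'}_\kappa$, so $P^L_\kappa = (Q_i)^{\{x_i\}}_\kappa$. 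Part (ii) follows from the induction hypothesis (making $Q_i$ rho-recurrent) combined with the lemma. For part (i), I case-split on whether $\kappa \le \kappa_c(L')$ or $\kappa > \kappa_c(L')$: in the first case, the induction hypothesis and \cref{:IS?:critical} place $Q_i$ in the rho-transient or critically rho-recurrent regime with $\rho_{Q_i} = \rho$ and $r(U_{Q_i}|x_i,x_i) = 1/\rho$, so the type of $P^L_\kappa$ is read off exactly as in the base case; in the second case, induction makes $Q_i$ strictly rho-recurrent and the lemma again applies. The main delicacy is to argue that this case split is exhaustive, i.e.\ that $\kappa_c(L) \le \kappa_c(L')$, which I would derive from $\rho^L_\kappa \ge \rho^{L'}_\kappa$ together with \cref{:IS?:critical} applied to both $L$ and $L'$.
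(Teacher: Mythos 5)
Your single-point lemma and the induction built on it are sound for part (ii) and for the claim that $P^L_\kappa$ is strictly rho-recurrent when $\kappa>\kappa_c(L)$: showing $U_R(x,x|1/\rho_R)=1$ together with $r(U_R|x,x)=r(U_Q|x,x)\ge 1/\rho_Q>1/\rho_R$ at the single lazy point $x$ suffices, because strict rho-recurrence is by definition an existential statement (see \eqref{:P:SCR>}). This is a slightly more direct packaging of the paper's \cref{:singleCri:3} combined with \cref{:P:rU=rU}, but it is essentially the same mechanism.

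The genuine gap is in the claim that $P^L_{\kappa_c(L)}$ is \emph{critically} rho-recurrent. Critical rho-recurrence is a universal statement: $r(U^L_{\kappa_c(L)}|x,x)=1/\rho$ must hold for \emph{every} $x\in S$. Your argument verifies it only at points of $L$, where the identity $r(U^{\{x\}}_\kappa|x,x)=r(U|x,x)$ of \cref{:P:rU=rU} is available; for $x\notin L$ the return generating function is the much more complicated object of \cref{:P:UkappaExpansion} \eqref{:P:UkappaExpansion:general}, and its radius of convergence is not simply $r(U|x,x)$. You bridge this with ``the standard fact that $r(U|x,x)$ is independent of $x$ for an irreducible chain,'' but this is not a fact: the paper explicitly notes that $r(U|x,y)$ may depend on $x$ and $y$, and the diagonal radii $r(U|x,x)$ can genuinely differ from state to state --- this is precisely why the paper must distinguish the universal condition \eqref{:rU=1/rho} from the existential one \eqref{:P:SCR>}, and why its definition of strict rho-recurrence is existential. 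If the radii were state-independent, both notions could be checked at a single point and the hardest part of the paper's proof would be unnecessary. That hardest part is Step~3 of the paper's argument: assuming some $x\in S$ (possibly outside $L$) satisfies $r(U^L_{\kappa_c(L)}|x,x)>1/\rho$, the paper ``unlazifies'' at $x$, uses continuity of the spectral radius to produce an underlying chain whose spectral radius drops strictly below $\rho$, and derives a contradiction via a commutation of lazy operations (with a separate device, passing to $P^S_\kappa$, for the case $P^L_{\kappa_c(L)}(x,x)=0$). Your proposal has no substitute for this step, so the ``critically rho-recurrent'' half of part (i) remains unproved.
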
		 
	
	For $\kappa\in(0,1)$, 
	observe that $\displaystyle(P^L_\kappa)^{S\setminus L}_\kappa = P^S_\kappa$.
	Hence by \cref{:gen} \eqref{:gen:monotonicity}, $\rho^L_\kappa \le \rho^S_\kappa$.
	
	\begin{theorem}\label{:reach?}
		Suppose that $\rho<1$, $\texttt{\#} (S\setminus L) >0$ and $\kappa\in(0,1)$.
		\begin{enumerate}[(i)]
			\item \label{:reach?:invariant}
			If $\texttt{\#} (S\setminus L) <\infty$, then 
			$\rho^L_\kappa = \rho^S_\kappa$ 
			if and only if $P$ is not strictly rho-recurrent.
			\item \label{:reach?:strictlyRhoRecurrentUnder} 
			If $P$ is strictly rho-recurrent, then $\rho^L_\kappa<\rho^S_\kappa$. 
			\item \label{:reach?:finiteUnder}
			If $\texttt{\#} L <\infty$, then $\rho^L_\kappa<\rho^S_\kappa$.
		\end{enumerate}
	\end{theorem}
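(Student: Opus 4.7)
The thread common to all three parts is the identity $(P^L_\kappa)^{S\setminus L}_\kappa = P^S_\kappa$ recorded before the theorem: it lets us view $P^S_\kappa$ as the $(S\setminus L,\kappa)$-lazy version of $Q := P^L_\kappa$, noting that $Q$ is irreducible with $\rho(Q) = \rho^L_\kappa \in (0,1)$ by \cref{:gen} \eqref{:gen:1}. Part \eqref{:reach?:finiteUnder} then falls out: with $L' := S\setminus L$ we have $\texttt{\#}(S\setminus L') = \texttt{\#} L < \infty$, so \cref{:IS?} \eqref{:IS?:comFiniteIS} applied to $Q$ and $L'$ yields strict monotonicity of $\kappa'\mapsto (\rho^L_\kappa)^{L'}_{\kappa'}$ on $[0,1)$; evaluating at $\kappa'=0$ and $\kappa'=\kappa$ gives $\rho^L_\kappa<\rho^S_\kappa$.

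For \eqref{:reach?:strictlyRhoRecurrentUnder} my plan is to fix $x \in S\setminus L$ (non-empty by hypothesis) and derive the product representation
\[
  U^L_\kappa(x,x|z) = \mathbb{E}^x\Bigl[\prod_{i=0}^{\tau^x-1}\beta(X_i,z)\Bigr],
  \qquad
  \beta(y,z) := \begin{cases}\phi(z), & y\in L,\\ z, & y\notin L,\end{cases}
\]
with $\phi(z):=z(1-\kappa)/(1-\kappa z)$ and $\tau^x:=\inf\{n\ge 1:X_n=x\}$ in the $P$-chain; this comes from a first-step decomposition exploiting geometric dwelling times at $L$-states and is clean precisely because $x \notin L$ rules out lazy dwelling at $x$. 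Since $\phi(1/\rho^S_\kappa)=1/\rho$ and $1/\rho^S_\kappa<1/\rho$, we have $\beta(y,1/\rho^S_\kappa)\le 1/\rho$, strictly so on $S\setminus L$; combined with $X_0=x\notin L$ and $U(x,x|1/\rho)=1$ (from rho-recurrence of $P$), this gives $U^L_\kappa(x,x|1/\rho^S_\kappa)\le \rho/\rho^S_\kappa<1$. Strict rho-recurrence ($r(U|x,x)>1/\rho$) together with the crude bound $\prod_i\beta(X_i,z)\le \max(\phi(z),z)^{\tau^x}$ shows $U^L_\kappa(x,x|\cdot)$ converges on $[0,\phi^{-1}(r(U|x,x)))$, a neighborhood of $1/\rho^S_\kappa$; continuity then gives $U^L_\kappa(x,x|z)<1$ in a smaller neighborhood, so $G^L_\kappa(x,x|z)=1/(1-U^L_\kappa(x,x|z))$ is finite there, giving $\rho^L_\kappa<\rho^S_\kappa$.

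For \eqref{:reach?:invariant}, the ``only if'' direction is the contrapositive of \eqref{:reach?:strictlyRhoRecurrentUnder} (which does not need $\texttt{\#}(S\setminus L)<\infty$). For the ``if'' direction I first establish the auxiliary equivalence: $P$ is strictly rho-recurrent iff $P^S_\kappa$ is. The companion identity $U^S_\kappa(x,x|z) = \kappa z + (1-\kappa z) U(x,x|\phi(z))$, obtained the same way with $L=S$, yields $r(U^S_\kappa|x,x) = \phi^{-1}(r(U|x,x))$, and a direct calculation at $\phi(1/\rho^S_\kappa)=1/\rho$ gives the equivalence. Hence if $P$ is not strictly rho-recurrent, neither is $P^S_\kappa = Q^{L'}_\kappa$. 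If $Q$ were rho-recurrent, \cref{:twoRhoRecurrences}(ii) applied to $Q$ and the non-empty finite $L'$ would force $Q^{L'}_\kappa$ strictly rho-recurrent, a contradiction; so $Q$ is rho-transient, and \cref{:twoRhoRecurrences}(i) together with $Q^{L'}_\kappa$ not being strictly rho-recurrent forces $\kappa\le \kappa_c(L';Q)$. Finally \cref{:IS?} \eqref{:IS?:critical} gives $\rho^S_\kappa = \rho(Q^{L'}_\kappa) = \rho(Q) = \rho^L_\kappa$.

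The main obstacle is to establish the two product representations cleanly: the one for $U^L_\kappa$ used in \eqref{:reach?:strictlyRhoRecurrentUnder} (simplified by choosing $x\notin L$) and the companion one for $U^S_\kappa$ used to transfer strict rho-recurrence between $P$ and $P^S_\kappa$ in \eqref{:reach?:invariant}; once these are in hand, the remaining arguments are structural applications of \cref{:IS?} and \cref{:twoRhoRecurrences}.
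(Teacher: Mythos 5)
Your treatments of parts \eqref{:reach?:invariant} and \eqref{:reach?:finiteUnder} are sound. For \eqref{:reach?:finiteUnder} you apply \cref{:IS?} \eqref{:IS?:comFiniteIS} to $Q=P^L_\kappa$ with lazy set $S\setminus L$; this is a legitimate shortcut (the proof of \cref{:IS?} \eqref{:IS?:comFiniteIS} does not rely on \cref{:reach?} \eqref{:reach?:finiteUnder}, so there is no circularity), whereas the paper argues \eqref{:reach?:finiteUnder} directly from \cref{:singleCri:3} by a case split on whether $P^L_\kappa$ is rho-transient or rho-recurrent. Your part \eqref{:reach?:invariant} is essentially the paper's argument, with \cref{:P:globallyUniformlyLazyMaintenance} and \cref{:P:globallyUniformlyLazyUU} re-derived along the way.

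Part \eqref{:reach?:strictlyRhoRecurrentUnder} has a genuine gap. Your product representation is correct (it is exactly \cref{:P:UkappaExpansion} \eqref{:P:UkappaExpansion:general} rewritten as an expectation, using $x\notin L$), and the evaluation $U^L_\kappa(x,x|1/\rho^S_\kappa)\le\rho/\rho^S_\kappa<1$ is fine. But to push past $1/\rho^S_\kappa$ you invoke ``strict rho-recurrence ($r(U|x,x)>1/\rho$)'' for the state $x\in S\setminus L$ that you fixed at the outset. Strict rho-recurrence is defined via \eqref{:P:SCR>}: there \emph{exists some} $x\in S$ with $r(U|x,x)>1/\rho$, and the paper explicitly warns that $r(U|x,y)$ may depend on the states; nothing guarantees that a witness can be found in $S\setminus L$. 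If every witness lies in $L$, your domination $U^L_\kappa(x,x|z)\le U(x,x|\phi(z))$ gives no control for $z>1/\rho^S_\kappa$ (since then $\phi(z)>1/\rho=r(U|x,x)$), and the continuity step collapses. To repair the argument you would have to run the estimate at an actual witness $x_0$, which may lie in $L$; there the expansion takes the form $U^L_\kappa(x_0,x_0|z)=\kappa z+(1-\kappa)\bigl(\cdots\bigr)$, the crude bound at $z=1/\rho^S_\kappa$ gives only $\le 1$ with equality when all intermediate states lie in $L$, and strictness must be extracted from irreducibility (existence of a first-return path through $S\setminus L$). None of this is in your write-up. The paper avoids the issue entirely by a structural argument: pick $x\in S\setminus L$, note $\rho^L_\kappa\le\rho^{S\setminus\{x\}}_\kappa$ by monotonicity and $(P^{S\setminus\{x\}}_\kappa)^{\{x\}}_\kappa=P^S_\kappa$, which is strictly rho-recurrent by \cref{:P:globallyUniformlyLazyMaintenance}; then \cref{:twoRhoRecurrences} and \cref{:IS?} \eqref{:IS?:critical}--\eqref{:IS?:finiteIS} applied to the single-point lazification force $\rho^{S\setminus\{x\}}_\kappa<\rho^S_\kappa$ regardless of where the witness of strict rho-recurrence sits. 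You should either adopt that route or carry out the witness-at-$x_0\in L$ computation in full.
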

	
	The rest of this note is organized as follows. 
	\cref{:P} gives some preliminary results that we will use throughout the note.
	Proof of \cref{:gen} is provided in \cref{:gen:pf}.
	In \cref{:pf1}, we show the proofs of \cref{:IS?} \eqref{:IS?:critical}, \eqref{:IS?:finiteIS}, and \cref{:reach?} \eqref{:reach?:finiteUnder}.
	In \cref{:pf2}, we give the proofs of \cref{:twoRhoRecurrences}, \cref{:reach?} \eqref{:reach?:invariant}, \eqref{:reach?:strictlyRhoRecurrentUnder}, and \cref{:IS?} \eqref{:IS?:comFiniteIS}.
	
	\subsection*{Acknowledgment}
	Part of this research was done while the second author was a Postdoc at the Technion—Israel Institute of Technology, supported by a scholarship from the Israel Council for Higher Education.
	
	The authors want to thank Dayue Chen for helpful conversations. 
	
	\section{Preliminary}\label{:P}
	This section will introduce some basic results for irreducible Markov chains.
	Recall that we used notations $(\mathbb P^x)_{x\in S}, G(\cdot,\cdot|\cdot), \rho, U(\cdot,\cdot|\cdot)$, 	and $r(U|\cdot,\cdot)$ to represent the probability of a Markov chain, the Green function, the spectral radius, the U-function, and the convergence radius of the U-function, corresponding to an irreducible transition probability $P$ on $S$, respectively.
	In the rest of this note, we will use notations $(\mathbb P^{L,x}_\kappa)_{x\in S}, G^L_\kappa(\cdot,\cdot|\cdot), \rho^L_\kappa, U^L_\kappa(\cdot,\cdot|\cdot)$ and $r(U^L_\kappa|\cdot,\cdot)$ to represent the similar concepts for $P^L_\kappa$, the $(L,\kappa)$-lazy version of $P$.
	
	\subsection{Markov chains}\label{:P:MC}
	We first observe that for any $x\in S$, 
	\begin{align}\label{:P:rG<=rU}
		r(U|x,x) \ge 1/\rho
	\end{align}
	by the Cauchy-Hadamard formula.
	It is then easy to see that $P$ is strictly rho-recurrent if and only if
	\begin{align}\label{:P:SCR>}
		\exists x\in S, r(U|x,x) > 1/\rho.
	\end{align}
	The following lemma connects the Green function, the U-function, and the spectral radius. 
	Some of these results were known in the literature (see \cite{W00}*{Lemma 1.13} for examples).
	Here, we include their proofs for the sake of completeness.
	
	\begin{lemma}\label{:P:GU}
		For $x\in S$ and $z>0$, 
		\begin{enumerate}[(i)]
			\item \label{step1}
			$U(x,x|z)<1 \Leftrightarrow G(x,x|z)<\infty.$
			\item \label{step2}
			If $G(x,x|z)<\infty$, then
			\begin{equation}\label{GU=}
				G(x,x|z) = \frac{1}{1-U(x,x|z)}.
			\end{equation}
			\item \label{step3} 
			$U(x,x|z)>1 \Leftrightarrow z>1/\rho.$
			\item \label{step4}
			$1/\rho=\max\{z>0: U(x,x|z) \le 1 \}.$
		\end{enumerate}
	\end{lemma}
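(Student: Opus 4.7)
The plan is to first establish the fundamental convolution identity
\begin{align*}
G(x,x|z) \;=\; 1 + U(x,x|z)\, G(x,x|z), \qquad z>0,
\end{align*}
interpreted as an equality in $[0,\infty]$. This comes from the first-return decomposition: for $n\ge 1$, conditioning on $\tau^x$ yields $\mathbb{P}^x(X_n=x) = \sum_{k=1}^n \mathbb{P}^x(\tau^x=k)\,\mathbb{P}^x(X_{n-k}=x)$, and Tonelli's theorem applied to the nonnegative double series (after multiplying by $z^n$ and summing over $n\ge 1$) produces the identity.

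With the identity in hand, parts \eqref{step1} and \eqref{step2} are essentially algebraic. If $U(x,x|z)<1$, then taking partial sums in the convolution bound $\sum_{n=0}^N \mathbb{P}^x(X_n=x)z^n \le 1 + U(x,x|z)\sum_{n=0}^N \mathbb{P}^x(X_n=x)z^n$ gives the uniform bound $\sum_{n=0}^N \mathbb{P}^x(X_n=x)z^n \le 1/(1-U(x,x|z))$, so $G(x,x|z)<\infty$, and the identity then forces equality in \eqref{GU=}. Conversely, if $G(x,x|z)<\infty$, the identity immediately gives $U(x,x|z) = 1 - 1/G(x,x|z) < 1$.

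For part \eqref{step3}, I would prove the stronger assertion
\begin{align*}
\{\,z>0:\, U(x,x|z)\le 1\,\} \;=\; (0,\,1/\rho\,],
\end{align*}
from which both \eqref{step3} and \eqref{step4} follow at once. The inclusion $(0,1/\rho)\subset\{U\le 1\}$ is immediate from \eqref{step1} and the definition of $\rho$ as the reciprocal of the radius of convergence of $G(x,x|\cdot)$. To include $z=1/\rho$, I use that $U(x,x|\cdot)$, being a power series with nonnegative coefficients, is left-continuous at $1/\rho$ by monotone convergence, hence $U(x,x|1/\rho)\le 1$. For the reverse inclusion, suppose $U(x,x|z_0)\le 1$. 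If $U(x,x|z_0)<1$, part \eqref{step1} gives $G(x,x|z_0)<\infty$ and hence $z_0\le 1/\rho$; if $U(x,x|z_0)=1$, the identity forces $G(x,x|z_0)=\infty$ and so $z_0\ge 1/\rho$, while strict monotonicity of $U(x,x|\cdot)$ on $(0,r(U|x,x))$ (which follows from the existence of at least one $n\ge 1$ with $\mathbb{P}^x(\tau^x=n)>0$, guaranteed by irreducibility) rules out $z_0>1/\rho$, forcing $z_0=1/\rho$.

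The main subtlety is thus handling the boundary $z=1/\rho$: one must combine left-continuity from below with the strict-monotonicity argument from above to pin down the level set $\{U(x,x|\cdot)\le 1\}$ as the closed half-line $(0,1/\rho]$, rather than, say, $(0,1/\rho)$.
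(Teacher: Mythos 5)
Your proposal is correct and follows essentially the same route as the paper: the first-return convolution identity $G(x,x|z)=1+U(x,x|z)G(x,x|z)$ (which the paper derives via truncated sums and a $\lfloor N/2\rfloor$ sandwich rather than Tonelli on the full double series), monotone convergence to get $U(x,x|1/\rho)\le 1$ at the boundary, and strict monotonicity of $U(x,x|\cdot)$ to rule out $U(x,x|z)=1$ for $z>1/\rho$. No gaps.
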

	
	\begin{proof}
		For $N\in \mathbb Z^+$, 
		\begin{align} \label{eqGU}
			\sum_{n=0}^N \mathbb P^x(X_n=x) z^n
			&= 1 + \sum_{n=1}^N \sum_{m=0}^n \mathbb P^x(\tau^x=m) \mathbb P^x(X_{n-m}=x) z^n
			\\&= 1 + \sum_{m=0}^N \mathbb P^x(\tau^x=m) z^m \sum_{n=m}^N \mathbb P^x(X_{n-m}=x) z^{n-m}
			\\&= 1 + \sum_{m=0}^N \mathbb P^x(\tau^x=m) z^m \sum_{n=0}^{N-m} \mathbb P^x(X_n=x) z^{n}
			.
		\end{align}
		Note that 
		\begin{align}\label{eqGU1}
			& \sum_{n=0}^N \mathbb P^x(X_n=x) z^n
			\ge 1+ \sum_{m=0}^{\lfloor N/2\rfloor} \mathbb P^x(\tau^x=m) z^m \sum_{n=0}^{\lfloor N/2\rfloor} \mathbb P^x(X_n=x) z^n.
		\end{align}	    
		If $G(x,x|z)<\infty$, then by taking $N$ to infinity, we have $U(x,x|z)<1$.
		By taking $N$ to infinity in
		both
		\begin{align}\label{eqGU2}
			& \sum_{n=0}^N \mathbb P^x(X_n=x) z^n
			\le 1+ \sum_{m=0}^N \mathbb P^x(\tau^x=m) z^m \sum_{n=0}^N \mathbb P^x(X_n=x) z^n
		\end{align}
		and \eqref{eqGU1}, we have \eqref{step2}.
		
		Now we assume that $G(x,x|z)=\infty$.
		By \eqref{eqGU2}, we have 
		$$\sum_{m=0}^N \mathbb P^x(\tau^x=m) z^m \ge \frac{\sum_{n=0}^N \mathbb P^x(X_n=x) z^n-1}{\sum_{n=0}^N \mathbb P^x(X_n=x) z^n}.$$
		Let $N\to\infty$, we have $U(x,x|z) \ge 1$.
		Therefore \eqref{step1} is proved.
		
		For $\Rightarrow$ of \eqref{step3}, by the strictly increasing property of $U(x,x|\cdot)$, it suffices to show that $U(x,x|1/\rho) \le 1$. 
		Noticing that the coefficients of the power series $U(x,x|\cdot)$ is nonnegative, 
		by the monotone convergence theorem, we have 
		$$\lim\limits_{z\uparrow 1/\rho} U(x,x|z) = U(x,x|1/\rho).$$
		By \eqref{step1} we have that $U(x,x|z)<1$ for $z<1/\rho$. 
		Therefore $U(x,x|1/\rho) \le1$ as desired.
		
		For $\Leftarrow$ of \eqref{step3}, $z>1/\rho$ implies that $G(x,x|z)=\infty$, 
		and, by \eqref{step1}, further implies that $U(x,x|z)\ge1$. We only have to exclude $U(x,x|z)=1$ by contradiction: If it holds, then for $w\in(1/\rho,z)$, $U(x,x|w)<1$, which by \eqref{step1}, contradicts the fact that $G(x,x|w)<\infty$.  
		
		\eqref{step4} can be directly concluded from \eqref{step3}.
	\end{proof}
	
	As a corollary, we have another equivalent condition for the rho-recurrence and the rho-transience.
	\begin{corollary}\label{:P:U(1/rho)?1}
		For any $x\in S$, $P$ is rho-recurrent $\Leftrightarrow U(x,x|1/\rho) =1$;
		and $P$ is rho-transient $\Leftrightarrow U(x,x|1/\rho) <1$.
	\end{corollary}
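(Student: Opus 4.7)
The plan is to derive this corollary as a direct consequence of \cref{:P:GU}, together with the definitions of rho-recurrence and rho-transience (which say that $G(x,x\,|\,1/\rho)$ is simultaneously finite, or simultaneously infinite, for all $x\in S$).

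First, I would apply \cref{:P:GU} \eqref{step1} at the point $z = 1/\rho$: this immediately gives
\[
U(x,x\,|\,1/\rho) < 1 \ \Longleftrightarrow\ G(x,x\,|\,1/\rho) < \infty.
\]
By definition, the right-hand side is exactly the statement that $P$ is rho-transient, which proves the second equivalence in the corollary.

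For the first equivalence, I would note that $P$ is rho-recurrent iff it is not rho-transient, so by the previous step this is equivalent to $U(x,x\,|\,1/\rho) \ge 1$. To upgrade the inequality to an equality, I would invoke the contrapositive of \cref{:P:GU} \eqref{step3}: since $1/\rho \not> 1/\rho$, we must have $U(x,x\,|\,1/\rho) \le 1$. Combining the two inequalities yields $U(x,x\,|\,1/\rho) = 1$, as desired.

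There is no real obstacle here; the whole argument is just the specialization of \cref{:P:GU} to $z = 1/\rho$, together with the dichotomy that rho-recurrence and rho-transience are complementary. The only thing to double-check is that $U(x,x\,|\,1/\rho) \le 1$ really follows from \eqref{step3} (which is stated as $U(x,x\,|\,z) > 1 \Leftrightarrow z > 1/\rho$), which it does by taking the contrapositive at $z = 1/\rho$.
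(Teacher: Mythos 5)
Your proposal is correct and follows essentially the same route as the paper: both reduce everything to \cref{:P:GU}, using part \eqref{step1} at $z=1/\rho$ for the transient equivalence and part \eqref{step3} (at $z=1/\rho$, where $U(x,x|1/\rho)\le 1$) to upgrade $U(x,x|1/\rho)\ge 1$ to an equality in the recurrent case. No issues.
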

	\begin{proof}
		From \cref{:P:GU} \eqref{step1}, we only have to prove $\Rightarrow$ of the first statement. 
		When $P$ is rho-recurrent, by \cref{:P:GU} \eqref{step1} we know that $U(x,x|1/\rho) \ge 1$.
		By \cref{:P:GU} \eqref{step3} we have $U(x,x|1/\rho)\le1$.
		We are done.
	\end{proof}

	\begin{lemma}\label{:P:rhoTransient}
		If $P$ is rho-transient, then $r(U|x,x) = 1/\rho$ for every $x\in S$.
	\end{lemma}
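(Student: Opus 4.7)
\smallskip

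The plan is to combine the lower bound $r(U|x,x)\ge 1/\rho$ from \eqref{:P:rG<=rU} with a matching upper bound obtained by contradiction. Thus, fix $x\in S$ and assume, for a contradiction, that $r(U|x,x)>1/\rho$. Then there exists $\delta>0$ such that $U(x,x|\cdot)$ is (finite and) absolutely convergent on $[0,1/\rho+\delta]$.

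The key observation is that $U(x,x|\cdot)$ is a power series with nonnegative coefficients, so it is continuous on the interior of its disk of convergence (in particular, continuous from the right at $1/\rho$). Since $P$ is rho-transient, \cref{:P:U(1/rho)?1} gives $U(x,x|1/\rho)<1$. By continuity, I can find $z\in(1/\rho,1/\rho+\delta)$ small enough that $U(x,x|z)<1$ still holds. But \cref{:P:GU}\eqref{step3} says that $z>1/\rho$ forces $U(x,x|z)>1$. This contradiction rules out $r(U|x,x)>1/\rho$ and, combined with \eqref{:P:rG<=rU}, yields $r(U|x,x)=1/\rho$, as required.

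There is no real obstacle here; the only point that needs care is justifying continuity of $U(x,x|\cdot)$ at the point $1/\rho$ from above, which follows since, under the contradiction hypothesis, $1/\rho$ lies strictly inside the disk of convergence. Everything else is a direct citation of the preliminary results already established.
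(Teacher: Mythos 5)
Your proof is correct and follows essentially the same route as the paper's: assume $r(U|x,x)>1/\rho$ for contradiction, use continuity of the power series inside its convergence radius together with \cref{:P:U(1/rho)?1} to find $z>1/\rho$ with $U(x,x|z)<1$, and derive a contradiction. The only cosmetic difference is that you close by citing \cref{:P:GU}~\eqref{step3} directly, whereas the paper invokes \cref{:P:GU}~\eqref{step1} to get $G(x,x|z)<\infty$ and contradicts the definition of the convergence radius of the Green function; both endings are valid.
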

	
	\begin{proof}
		For the sake of contradiction and \eqref{:P:rG<=rU}, we assume that there exists $x\in S$ such that $r(U|x,x) > 1/\rho$. 
		Then by the continuity of the power series inside of its convergence radius and \cref{:P:U(1/rho)?1}, 
		there exists $z > 1/\rho$ such that $U(x,x|z)<1$.
		Now by \cref{:P:GU} \eqref{step1}, 
		we have $G(x,x|z)<\infty$
		which contradicts the fact that $1/\rho$ is the convergence radius of $G(x,x|\cdot)$.
	\end{proof}
	
	\subsection{\texorpdfstring{$(L,\kappa)$-laziness}{}}\label{:P:ULL}
	
	By \cref{:P:GU}, the U-function is a good tool for studying the spectral radius.
	
	\begin{lemma}\label{:P:UkappaExpansion}
		Let $x\in S, \kappa\in[0,1)$ and $z \ge0$.
		\begin{enumerate}[(i)]
			\item \label{:P:UkappaExpansion:single}
			If $L=\{x\}$, then
			\begin{align}
				U^L_\kappa(x,x|z) = \kappa z + (1-\kappa) U(x,x|z).
			\end{align}
			\item \label{:P:UkappaExpansion:general}
			If $x\notin L$, 
			$L \neq \varnothing$ 
			and $\kappa z<1$, then 
			\begin{align}
				U^L_\kappa(x,x|z) =  
				\mathbb P^{x}(\tau^x=1) z + \sum_{k \ge2, \vec l \in \mathcal{T}(k,x)} P(\vec l) z^k
				\left(\frac{1-\kappa}{1-\kappa z}\right)^{\texttt{\#} \vec l(L)}
				,
			\end{align}
			where for $k\ge 2$,
			\begin{align}
				\mathcal{T}(k,x) := 
				\{ &(x_0,x_1,x_2,\cdots,x_{k-1},x_k) \in S^{k+1} : 
				\\ & x_0=x; 
				x_i \neq x, \forall i=1,\cdots,k-1; 
				x_k=x 
				\},
			\end{align}
			and for $\vec l = (x_0,x_1,x_2,\cdots,x_{k-1},x_k) \in \mathcal{T}(k,x) $,  
			$$P(\vec l):= \prod_{i=0}^{k-1} P(x_i,x_{i+1}),\ 
			\texttt{\#} \vec l(L) := \texttt{\#} \{ i: i =1,\cdots,k-1; x_i\in L \}.$$
			\item \label{:P:UkappaExpansion:infinity}
			If $x\notin L$, 
			$L \neq \varnothing$ 
			and $\kappa z\ge1$, then 
			$
			U^L_\kappa(x,x|z) = \infty.
			$
		\end{enumerate}
	\end{lemma}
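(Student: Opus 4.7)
The plan is to realize the $(L,\kappa)$-lazy chain via an auxiliary coin-flipping construction: at every step, if the current state $y$ belongs to $L$, toss an independent $\mathrm{Bernoulli}(\kappa)$ coin; on heads, the chain stays (a \emph{delay}); on tails, it performs a transition according to $P(y,\cdot)$ (a \emph{$P$-move}). If $y\notin L$, the chain always performs a $P$-move. This construction realises the transition law $P^L_\kappa$ and simultaneously equips each trajectory with a canonical labelling distinguishing delays from $P$-moves. Under this labelling, a first-return trajectory from $x$ to $x$ of time-length $n$ is uniquely encoded by an underlying $P$-path $\vec l=(x_0,\dots,x_k)\in\mathcal{T}(k,x)$ (the subsequence of $P$-moves, including any genuine $P$-self-loops) together with non-negative integers $(m_i)_{i\in\vec l(L)}$ recording the number of delays at each intermediate visit to a state in $L$, subject to $n=k+\sum_{i\in\vec l(L)}m_i$.

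For part (i) with $L=\{x\}$, I compute $\mathbb{P}^{L,x}_\kappa(\tau^x=n)$ directly by conditioning on the first step: $\mathbb{P}^{L,x}_\kappa(\tau^x=1)=\kappa+(1-\kappa)P(x,x)=\kappa+(1-\kappa)\mathbb{P}^x(\tau^x=1)$, while for $n\ge 2$, once the chain leaves $x$ the remaining states lie in $S\setminus\{x\}\subset S\setminus L$ on which $P^L_\kappa$ agrees with $P$, so a standard first-passage decomposition at $X_1$ gives $\mathbb{P}^{L,x}_\kappa(\tau^x=n)=(1-\kappa)\mathbb{P}^x(\tau^x=n)$. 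Multiplying by $z^n$ and summing yields $U^L_\kappa(x,x|z)=\kappa z+(1-\kappa)U(x,x|z)$.

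For part (ii) with $x\notin L$, the probability of the marked trajectory encoded by $(\vec l,(m_i))$ is the product of $P(\vec l)=\prod_{j=0}^{k-1}P(x_j,x_{j+1})$ (contributed by the $P$-moves) and, for each $i\in\vec l(L)$, the factor $\kappa^{m_i}(1-\kappa)$ (contributed by $m_i$ heads followed by the terminating tail at $x_i$); note that no delay is possible at $x_0=x\notin L$ or after the final $P$-move into $x_k=x\notin L$. Summing $z^n$ over all such configurations and exchanging sums by Tonelli (all summands are non-negative) gives
\[
U^L_\kappa(x,x|z)=\sum_{k\ge1}\sum_{\vec l\in\mathcal{T}(k,x)}P(\vec l)\,z^k(1-\kappa)^{\texttt{\#}\vec l(L)}\prod_{i\in\vec l(L)}\sum_{m_i\ge0}(\kappa z)^{m_i}.
\]
The $k=1$ term (necessarily $\vec l=(x,x)$ with $\texttt{\#}\vec l(L)=0$) contributes $\mathbb{P}^x(\tau^x=1)z$. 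For $k\ge 2$, under $\kappa z<1$ each inner geometric series equals $(1-\kappa z)^{-1}$, producing the claimed factor $\bigl((1-\kappa)/(1-\kappa z)\bigr)^{\texttt{\#}\vec l(L)}$. For part (iii), I use the irreducibility of $P$ to pick $y\in L$ (necessarily $y\ne x$) and concatenate a first-passage $P$-path from $x$ to $y$ with one from $y$ to $x$, producing a concrete $\vec l\in\mathcal{T}(k,x)$ containing $y$ as an intermediate state, hence $\texttt{\#}\vec l(L)\ge 1$; the corresponding inner geometric series then diverges when $\kappa z\ge 1$, forcing $U^L_\kappa(x,x|z)=\infty$.

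The main obstacle is to justify rigorously that the coin-flip construction provides a measure-preserving bijection between $P^L_\kappa$ first-return marked trajectories and the decorated data $(\vec l,(m_i))$, without double counting. The delicate point is to distinguish a genuine $P$-self-loop $x_i=x_{i+1}$ (recorded inside $\vec l$) from a delay at $x_i\in L$ (absorbed into $m_i$): the explicit labelling supplied by the coin-flip construction is precisely what prevents these two mechanisms from being conflated, and the sum over labellings reproduces the un-marked $P^L_\kappa$-probability via the elementary identity $\kappa+(1-\kappa)P(y,y)=\kappa\cdot 1+(1-\kappa)\cdot P(y,y)$.
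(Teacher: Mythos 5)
Your proof is correct and follows essentially the same route as the paper's: both realize $P^L_\kappa$ by the identical coin-flip construction, decompose a first-return excursion into its non-lazy skeleton $\vec l\in\mathcal{T}(k,x)$ plus delay counts, and sum out the delays (you sum the geometric series in the multiplicities $(m_i)$ directly, whereas the paper first groups by total length $n$ and invokes the generalized binomial theorem --- the same computation organized differently). The only nitpick is in (iii): a concatenation of two first-passage paths $x\to y\to x$ may revisit $x$ and hence need not lie in $\mathcal{T}(k,x)$, but passing to the first-return sub-loop that contains the visit to $y$ fixes this trivially.
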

	\begin{proof}
		For \eqref{:P:UkappaExpansion:single}, it is done by the following:
		\begin{align}
			\mathbb P^{\{x\},x}_\kappa(\tau^x=n) 
			= 
			\left \{ 
			\begin{aligned}
				& \kappa + (1-\kappa) \mathbb P^{x}(\tau^x=1) 
				,&&  \text{ if } n=1;
				\\ & (1-\kappa) \mathbb P^{x}(\tau^x=n) 
				,&&  \text{ if } n\ge2. 
			\end{aligned}
			\right.
		\end{align}
		
		For \eqref{:P:UkappaExpansion:general} and \eqref{:P:UkappaExpansion:infinity}, noticing that $x\notin L$ and $L\neq \varnothing$,
		we assert that
		\begin{align}\label{:P:UkappaExpansion:PExpansion}
			\mathbb P^{L,x}_\kappa(\tau^x=n) 
			= 
			\left \{ 
			\begin{aligned}
				& \mathbb P^{x}(\tau^x=1) 
				,&&  \text{ if } n=1;
				\\ & 
				\sum_{k=2}^n \sum_{\vec l \in \mathcal{T}(k,x)} 
				P(\vec l) (1-\kappa)^{\texttt{\#} \vec l(L)} 
				(-\kappa)^{n-k}  
				\binom{ -\texttt{\#} \vec l(L)}{n-k} 
				,&& \text{ if } 			
				n\ge2,
			\end{aligned}
			\right.
		\end{align}
		where we used the generalized binomial series: For $k \in \mathbb {Z}^+$ and arbitrary $\alpha\in \mathbb R$, 
		$$\displaystyle {\binom {\alpha }{k}}
		:={\frac {\alpha (\alpha -1)(\alpha -2)\cdots (\alpha -k+1)}{k!}}, \text{ in particular } \binom {\alpha }{0} = 1.$$
		
		Let us explain \eqref{:P:UkappaExpansion:PExpansion} when $n\ge2$.
		A Markov chain $X_t$ with transition probability $P^L_\kappa$ can be constructed in the following way:
		At each time $t\in \mathbb Z^+$, taking an independent uniform r.v. $\theta_t$ in $[0,1]$, if $X_{t-1}\in L$ and $\theta_t \le \kappa$, then we set $X_t:=X_{t-1}$ and say that the Markov chain takes a lazy step at time $t$;
		else if $X_{t-1}\notin L$ or $\theta_t >\kappa$, we sample $X_t$ according to the probability $\{P(X_{t-1},y): y\in S\}$ 
		and say that the chain takes a non-lazy step.
		
		We refer to \emph{the excursion} the trajectory of the Markov chain $(X_t)$ up to the time $\tau^x$.
		We refer to \emph{the non-lazy excursion} the trajectory of the Markov chain $X_t$ formed only by the non-lazy steps up to the time $\tau^x$.
		It is observed that on the event $\{\tau^x=n\}$, the excursion takes its value in $\mathcal T(n,x)$ while the non-lazy excursion takes its value in $\cup_{2\le k\le n} \mathcal T(k,x)$.
		
		For a given $2\le k\le n$ and $l\in \mathcal T(k,x)$, by the elementary combinatorics, the number of possible excursions in the event $\{\tau^x=n, l \text{ is the non-lazy excursion}\}$ is $\binom{-\texttt{\#} \vec l(L)}{n-k} (-1)^{n-k}$,
		and each of those excursions happens with the same probability $P(\vec l) (1-\kappa)^{\texttt{\#} \vec l(L)} \kappa^{n-k}$.
		Therefore $\mathbb P^x(\tau^x=n, l \text{ is the non-lazy excursion}) = P(\vec l) (1-\kappa)^{\texttt{\#} \vec l(L)} (-\kappa)^{n-k} \binom{-\texttt{\#} \vec l(L)}{n-k}$.
		Now \eqref{:P:UkappaExpansion:PExpansion} holds.
		
		By Fubini's theorem for nonnegative series ($\binom{-\texttt{\#} \vec l(L)}{n-k} (-1)^{n-k}$ is nonnegative), we have that
		\begin{align} 
			U^L_\kappa(x,x|z)
			&= \mathbb P^{x}(\tau^x=1) z
			+ \sum_{n\ge2} \sum_{k=2}^n \sum_{\vec l \in \mathcal{T}(k,x)} P(\vec l) (1-\kappa)^{\texttt{\#} \vec l(L)}  (-\kappa)^{n-k}  \binom{-\texttt{\#} \vec l(L)}{n-k}  z^n
			\\ &=  \mathbb P^{x}(\tau^x=1) z
			+ \sum_{k \ge2, \vec l \in \mathcal{T}(k,x)} P(\vec l) (1-\kappa)^{\texttt{\#} \vec l(L)} z^k
			\sum_{n\ge k}  \binom{-\texttt{\#} \vec l(L)}{n-k} (-\kappa z)^{n-k}.
		\end{align}
		For $\kappa z<1$, by the generalized binomial theorem, we have that
		\begin{align}
			\sum_{n\ge k}  \binom{-\texttt{\#} \vec l(L)}{n-k} (-\kappa z)^{n-k}
			= (1-\kappa z)^{-\texttt{\#} \vec l(L)}.
		\end{align}
		Now we have \eqref{:P:UkappaExpansion:general}.
		
		If $\kappa z\ge1$, then by $L\neq \varnothing$ and the irreducibility, there exist $k_0 \ge2$ and $\vec l_0 \in \mathcal{T}(k_0,x)$ such that $P(\vec l_0)>0$ and $\texttt{\#} \vec l_0(L) >0$.
		As $\kappa<1$ and $z>0$, we have that
		\begin{align}
			P(\vec l_0) (1-\kappa)^{\texttt{\#} \vec l_0(L)} z^{k_0}
			\sum_{n\ge k_0}  \binom{-\texttt{\#} \vec l_0(L)}{n-k_0} (-\kappa z)^{n-k_0} = \infty.
		\end{align}
		Thus we have \eqref{:P:UkappaExpansion:infinity}.
	\end{proof}
	
	\begin{corollary}\label{:P:rU=rU}
		If $L=\{x\}$, then $r(U^L_\kappa|x,x) = r(U|x,x)$ for every $\kappa\in[0,1)$.
	\end{corollary}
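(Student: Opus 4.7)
The plan is to reduce the claim to the explicit identity already provided by Lemma~\ref{:P:UkappaExpansion}\eqref{:P:UkappaExpansion:single}. With $L=\{x\}$ and $\kappa\in[0,1)$, that lemma gives
\begin{align}
U^L_\kappa(x,x|z) = \kappa z + (1-\kappa)\, U(x,x|z)
\end{align}
as an identity of power series in $z$. So the convergence radius on the left is just the convergence radius of the power series obtained by affinely transforming $U(x,x|z)$ on the right.

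From this identity, I read off the coefficients of $U^L_\kappa(x,x|z)$: the $z^0$ coefficient is $0$, the $z^1$ coefficient is $\kappa+(1-\kappa)\mathbb P^x(\tau^x=1)$, and for every $n\ge 2$ the $z^n$ coefficient is $(1-\kappa)\mathbb P^x(\tau^x=n)$. Since $1-\kappa>0$, the Cauchy--Hadamard formula yields
\begin{align}
\frac{1}{r(U^L_\kappa|x,x)}
= \limsup_{n\to\infty} \bigl((1-\kappa)\mathbb P^x(\tau^x=n)\bigr)^{1/n}
= \limsup_{n\to\infty} \mathbb P^x(\tau^x=n)^{1/n}
= \frac{1}{r(U|x,x)},
\end{align}
because $(1-\kappa)^{1/n}\to 1$ as $n\to\infty$, and the monomial $\kappa z$ only alters a single coefficient (hence does not affect the $\limsup$).

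There is really no obstacle here; the corollary is a direct consequence of part~\eqref{:P:UkappaExpansion:single} of the preceding lemma combined with the elementary fact that adding a polynomial to a power series and multiplying it by a nonzero scalar preserves the radius of convergence. The only thing worth being careful about is handling the $n=1$ term separately, which is harmless since a single coefficient plays no role in the $\limsup$.
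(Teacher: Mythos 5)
Your proof is correct and is essentially the argument the paper intends: the corollary is stated without proof precisely because it follows immediately from Lemma~\ref{:P:UkappaExpansion}\eqref{:P:UkappaExpansion:single}, since multiplying a power series by the nonzero constant $1-\kappa$ and adding the monomial $\kappa z$ does not change the radius of convergence. Your Cauchy--Hadamard computation just makes this routine observation explicit.
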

	
	Let us state two more results when $L=S$.
	\begin{lemma}[\cite{W00}, Lemma 9.2]\label{:P:globallyUniformlyLazyGG}
		For $\kappa\in[0,1)$, $x\in S$ and $z\in[0,1/\rho^{S}_{\kappa})$,
		$$G^S_\kappa(x,x|z) = \frac{1}{1-\kappa z} G\left(x,x \middle| \frac{(1-\kappa)z}{1-\kappa z}\right).$$
	\end{lemma}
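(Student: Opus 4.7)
The plan is to write $P^S_\kappa=\kappa I+(1-\kappa)P$, expand the $n$th power by the binomial theorem (valid since $I$ and $P$ commute), insert the result into the series defining $G^S_\kappa(x,x|z)$, and then reorder the double sum.

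Concretely, the binomial expansion gives
$$\mathbb{P}^{S,x}_\kappa(X_n=x)=\sum_{k=0}^n\binom{n}{k}\kappa^{n-k}(1-\kappa)^k\,\mathbb{P}^x(X_k=x),$$
so substituting into $G^S_\kappa(x,x|z)$ and swapping the two sums (permitted by Tonelli for nonnegative double series) yields
$$G^S_\kappa(x,x|z)=\sum_{k=0}^\infty(1-\kappa)^k\,\mathbb{P}^x(X_k=x)\,z^k\sum_{m=0}^\infty\binom{m+k}{k}(\kappa z)^m.$$
Whenever $\kappa z<1$, the inner sum is the negative binomial expansion $(1-\kappa z)^{-(k+1)}$; pulling the factor $1/(1-\kappa z)$ out of the remaining $k$-sum then produces exactly $\frac{1}{1-\kappa z}\,G\bigl(x,x\bigm|\tfrac{(1-\kappa)z}{1-\kappa z}\bigr)$, which is the claimed identity.

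The remaining bookkeeping is to verify that the hypothesis $z\in[0,1/\rho^S_\kappa)$ places us inside both convergence regimes. Using $\rho^S_\kappa=\kappa+(1-\kappa)\rho$ from \eqref{:glLi}, the condition $z<1/\rho^S_\kappa$ rearranges to $\kappa z+(1-\kappa)\rho z<1$, which simultaneously gives $\kappa z<1$ and $\frac{(1-\kappa)z}{1-\kappa z}<1/\rho$; the first justifies the negative binomial step, and the second places the argument of $G(x,x|\cdot)$ strictly inside its radius of convergence. The main (and only) obstacle is this simultaneous convergence check — there is no deep probabilistic content beyond the commutation $IP=PI$ and Tonelli, and one could equally realize the argument probabilistically by sampling the number $k$ of non-lazy steps among the first $n$ transitions as $\mathrm{Binomial}(n,1-\kappa)$.
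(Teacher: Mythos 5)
Your proof is correct. Note, though, that the paper does not prove this lemma at all: it is quoted verbatim from Woess \cite{W00}*{Lemma 9.2}, so there is no in-paper argument to compare against. Your derivation is the standard one and is essentially Woess's: since $I$ and $P$ commute, $(P^S_\kappa)^n=\sum_{k=0}^n\binom{n}{k}\kappa^{n-k}(1-\kappa)^kP^k$, and Tonelli plus the negative binomial series $\sum_{m\ge0}\binom{m+k}{k}t^m=(1-t)^{-(k+1)}$ for $t=\kappa z<1$ gives the identity; your convergence bookkeeping via \eqref{:glLi} (that $z<1/\rho^S_\kappa$ forces both $\kappa z<1$ and $\tfrac{(1-\kappa)z}{1-\kappa z}<1/\rho$) is also right, and in fact the Tonelli step shows the identity holds in $[0,\infty]$ for all $z$ with $\kappa z<1$, so the restriction to $z<1/\rho^S_\kappa$ is only needed to guarantee finiteness. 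The closest thing in the paper is its proof of the companion \cref{:P:globallyUniformlyLazyUU} for the U-function, which runs the same negative-binomial computation but at the level of first-return excursions (via \eqref{:P:UkappaExpansion:PExpansion}) rather than $n$-step transition probabilities, because first-return probabilities do not satisfy a clean semigroup/binomial identity; your matrix-power route is the simpler one available for the Green function and would not transfer directly to the U-function. One cosmetic caveat: since the paper attributes both \eqref{:glLi} and this lemma to the same source, if you intended your argument as a replacement proof you should derive \eqref{:glLi} from your identity (which it does yield, by comparing radii of convergence) rather than cite it, to avoid the appearance of circularity.
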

	
	\begin{lemma}\label{:P:globallyUniformlyLazyUU}
		For $\kappa\in[0,1)$, $x\in S$ and $z\in\left[0,r(U^S_\kappa|x,x)\right)$,
		$$U^S_\kappa(x,x|z) = (1-\kappa z) U\left(x,x\middle|\frac{(1-\kappa)z}{1-\kappa z}\right) +\kappa z,$$
		and
		$$1/r(U^S_\kappa|x,x) = \kappa + (1-\kappa)/r(U|x,x).$$
	\end{lemma}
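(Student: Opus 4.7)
The plan is to combine Lemma~\ref{:P:globallyUniformlyLazyGG} with the $G$--$U$ bridge of Lemma~\ref{:P:GU}\eqref{step2}, first obtaining the identity on the small interval $[0,1/\rho^S_\kappa)$, then extending it by a power-series comparison, and finally identifying the radius of convergence via a Fubini/nonnegativity argument.

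\emph{Step 1: identity on $[0,1/\rho^S_\kappa)$.} Set $w:=\frac{(1-\kappa)z}{1-\kappa z}$. On this range, both $G^S_\kappa(x,x|z)$ and $G(x,x|w)$ are finite (the first by the definition of $\rho^S_\kappa$, the second via Lemma~\ref{:P:globallyUniformlyLazyGG}), so Lemma~\ref{:P:GU}\eqref{step2} applies to each. Substituting $G=1/(1-U)$ into Lemma~\ref{:P:globallyUniformlyLazyGG} and rearranging gives $1-U^S_\kappa(x,x|z)=(1-\kappa z)(1-U(x,x|w))$, which is the target identity on this interval.

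\emph{Step 2: extension and radius.} To promote the identity and determine the radius, I would expand $\kappa z+(1-\kappa z)U(x,x|w(z))$ as a power series in $z$: the generalized binomial formula $(1-\kappa z)^{-(k-1)}=\sum_m\binom{m+k-2}{k-2}(\kappa z)^m$ together with Fubini for nonnegative double sums produces a power series with nonnegative coefficients whose value is $\kappa z+(1-\kappa z)U(x,x|w(z))$ whenever $\kappa z<1$ and $w(z)<r(U|x,x)$. Since this series agrees with $U^S_\kappa(x,x|z)=\sum_n\mathbb P^{S,x}_\kappa(\tau^x=n)z^n$ on $[0,1/\rho^S_\kappa)$ by Step~1, their Taylor coefficients must coincide, so the two power series share the same radius of convergence and agree wherever either converges. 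Setting $z_*:=1/\bigl(\kappa+(1-\kappa)/r(U|x,x)\bigr)$, the equation $w(z_*)=r(U|x,x)$ shows that the Fubini expansion is finite on $[0,z_*)$ and equals $+\infty$ on $(z_*,1/\kappa)$, because $U(x,x|\cdot)$ has nonnegative coefficients with radius exactly $r(U|x,x)$. This pins the radius of convergence to $z_*$, yielding $1/r(U^S_\kappa|x,x)=\kappa+(1-\kappa)/r(U|x,x)$ and, together with Step~1, the identity on all of $[0,r(U^S_\kappa|x,x))$.

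The main delicate point is the last step: one needs the nonnegativity of the coefficients of $U(x,x|\cdot)$ to guarantee that the divergence for $z>z_*$ is genuine rather than merely a symptom of the closed form breaking down. This is what upgrades the inequality radius~$\ge z_*$ (immediate from convergence on $[0,z_*)$) into an equality. The remaining manipulations are routine algebra given Lemmas~\ref{:P:GU} and~\ref{:P:globallyUniformlyLazyGG}.
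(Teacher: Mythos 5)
Your argument is correct, but it reaches the identity by a genuinely different route than the paper. The paper specializes its excursion decomposition \eqref{:P:UkappaExpansion:PExpansion} to $L=S$ to write down $\mathbb P^{S,x}_\kappa(\tau^x=n)$ explicitly as a combinatorial sum over the return-time distribution of $P$, and then sums the generating function directly (following the proof of Lemma 9.2 in \cite{W00}); the closed form and the radius both fall out of that single computation. You instead bypass the combinatorics entirely: the substitution of $G=1/(1-U)$ from \cref{:P:GU}~\eqref{step2} into \cref{:P:globallyUniformlyLazyGG} hands you the identity on $[0,1/\rho^S_\kappa)$ for free (note that $\kappa z<1$ and $w(z)<1/\rho\le r(U|x,x)$ do hold on that interval, so both $U$-values are finite and less than $1$ there, which keeps the algebra legitimate), and you then recover the coefficient identity a posteriori by matching Taylor coefficients of two power series that agree on a nondegenerate interval. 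Where the two proofs reconverge is the radius computation: you still need the negative binomial expansion of $(1-\kappa z)^{-(k-1)}$ and Tonelli for nonnegative terms to know that the power series genuinely equals the closed form, in $[0,+\infty]$, on all of $[0,1/\kappa)$ rather than just on $[0,1/\rho^S_\kappa)$ --- and you correctly identify that this nonnegativity is exactly what upgrades ``radius $\ge z_*$'' to equality, since a nonnegative power series must sum to $+\infty$ beyond its radius. The trade-off: your route leans on the imported Green-function lemma and a power-series rigidity argument, which is arguably cleaner for the identity itself, while the paper's route is self-contained given \eqref{:P:UkappaExpansion:PExpansion} and yields the coefficients $\mathbb P^{S,x}_\kappa(\tau^x=n)$ explicitly as a by-product. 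The only loose end in your write-up is the degenerate case $r(U|x,x)=\infty$ (so $z_*=1/\kappa$), where the divergence for $z>z_*$ must instead come from the $\sum_m(\kappa z)^m$ factors together with irreducibility supplying some $k\ge2$ with $\mathbb P^x(\tau^x=k)>0$; this is a one-line patch and the paper's sketch does not address it either.
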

	\begin{proof}
		It is similar to the proof of Lemma 9.2 in \cite{W00}, noticing by \eqref{:P:UkappaExpansion:PExpansion} that
		\begin{align}
			\mathbb P^{S,x}_\kappa(\tau_x=n) = \left\{
			\begin{aligned}
				&\kappa + (1-\kappa) \mathbb P^{x}(\tau_x=1), &n=1;
				\\ &\sum\limits_{k=2}^n \mathbb P^{x}(\tau_x=k) (-\kappa)^{n-k} (1-\kappa)^k \binom{-k+1}{n-k}, &n\ge2.
			\end{aligned}
			\right.
		\end{align}
	\end{proof}
	
	\begin{lemma}\label{:P:globallyUniformlyLazyMaintenance}
		For $\kappa\in[0,1)$, 
		$P$ is rho-transient (critically rho-recurrent, or strictly rho-recurrent, respectively), 
		if and only if so is $P^S_\kappa$. 
	\end{lemma}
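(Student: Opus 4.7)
The plan is to transport the three-way classification through the explicit identities in \cref{:P:globallyUniformlyLazyUU}, which tie $U^S_\kappa(x,x|\cdot)$ to $U(x,x|\cdot)$ via the Möbius-type substitution $z\mapsto (1-\kappa)z/(1-\kappa z)$. The central algebraic observation is that this substitution sends $1/\rho^S_\kappa$ to $1/\rho$: using \eqref{:glLi} one has $1-\kappa/\rho^S_\kappa = (1-\kappa)\rho/\rho^S_\kappa$, which is strictly positive since $\rho>0$, and therefore $(1-\kappa)/\rho^S_\kappa$ divided by $1-\kappa/\rho^S_\kappa$ equals $1/\rho$.

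To handle the rho-transient versus rho-recurrent dichotomy, I would take the limit $z\uparrow 1/\rho^S_\kappa$ in the identity
$$U^S_\kappa(x,x|z) = (1-\kappa z)\, U\left(x,x\middle|\frac{(1-\kappa)z}{1-\kappa z}\right) + \kappa z,$$
which is legitimate by monotone convergence, since $U(x,x|\cdot)$ has nonnegative coefficients and $(1-\kappa)z/(1-\kappa z)$ increases continuously up to $1/\rho$. This yields
$$U^S_\kappa(x,x|1/\rho^S_\kappa) = (1-\kappa/\rho^S_\kappa)\, U(x,x|1/\rho) + \kappa/\rho^S_\kappa.$$
Since $1-\kappa/\rho^S_\kappa>0$, the affine map $u\mapsto(1-\kappa/\rho^S_\kappa)u+\kappa/\rho^S_\kappa$ fixes $1$ and is strictly increasing, so $U^S_\kappa(x,x|1/\rho^S_\kappa)$ equals (resp.\ is strictly less than) $1$ iff $U(x,x|1/\rho)$ does. \cref{:P:U(1/rho)?1} then gives the equivalence between rho-recurrence (resp.\ rho-transience) of $P$ and that of $P^S_\kappa$.

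For the critical versus strict rho-recurrence distinction, I would use the second identity in \cref{:P:globallyUniformlyLazyUU}, namely $1/r(U^S_\kappa|x,x) = \kappa + (1-\kappa)/r(U|x,x)$, together with $1/\rho^S_\kappa = 1/(\kappa+(1-\kappa)\rho)$. Since $1-\kappa>0$, the map $r\mapsto \kappa+(1-\kappa)/r$ is strictly decreasing in $r$, and so comparing reciprocals directly yields $r(U^S_\kappa|x,x)=1/\rho^S_\kappa$ iff $r(U|x,x)=1/\rho$, and $r(U^S_\kappa|x,x)>1/\rho^S_\kappa$ iff $r(U|x,x)>1/\rho$. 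Combined with \eqref{:P:SCR>} and the definitions of critical and strict rho-recurrence, these equivalences complete the three-way correspondence.

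The only subtlety I expect to watch is the boundary evaluation at $z=1/\rho^S_\kappa$: the identity in \cref{:P:globallyUniformlyLazyUU} is stated only for $z<r(U^S_\kappa|x,x)$, and in the critical case these two values coincide. The monotone-convergence argument above handles this cleanly, since $\rho>0$ guarantees that $(1-\kappa)z/(1-\kappa z)$ stays bounded by $1/\rho$ and that the prefactor $1-\kappa z$ remains bounded away from $0$ along the limit.
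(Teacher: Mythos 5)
Your proof is correct and follows essentially the same route as the paper: transport the classification through the change-of-variable identities of \cref{:P:globallyUniformlyLazyUU} (and \eqref{:glLi}), take a monotone limit at $z=1/\rho^S_\kappa$ using $1-\kappa/\rho^S_\kappa>0$, and read off the critical/strict distinction from the radius relation $1/r(U^S_\kappa|x,x)=\kappa+(1-\kappa)/r(U|x,x)$ via \eqref{:P:SCR>}. The only cosmetic difference is that you settle the transient/recurrent dichotomy through the $U$-function identity and \cref{:P:U(1/rho)?1}, whereas the paper uses the parallel Green-function identity of \cref{:P:globallyUniformlyLazyGG}; both work equally well.
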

	\begin{proof}
		By \eqref{:glLi} and \eqref{:rho<=1}, $\rho^S_\kappa > \kappa$, and thus $1-\kappa/\rho^S_\kappa>0$.
		By the monotone convergence theorem and \cref{:P:globallyUniformlyLazyGG},
		\begin{align}
			G^S_\kappa(x,x|1/\rho^S_\kappa)
			&=\lim_{z \uparrow 1/\rho^S_\kappa}
			G^S_\kappa(x,x|z)
			= \lim_{z \uparrow 1/\rho^S_\kappa}
			\frac{1}{1-\kappa z}
			G(x,x|\frac{(1-\kappa) z}{1-\kappa z})
			\\&=\frac{1}{1-\kappa/\rho^S_\kappa}
			G(x,x|1/\rho).
		\end{align}
		Therefore $P$ being rho-transient (or rho-recurrent) is equivalent to that $P^S_\kappa$ being rho-transient (or rho-recurrent).
		
		If $P$ is critically (or strictly) rho-recurrent, then for any $x\in S$, $r(U|x,x) = 1/\rho$ (or by \eqref{:P:SCR>}, there exists $x$ such that $r(U|x,x) > 1/\rho$).
		By \cref{:P:globallyUniformlyLazyUU} and \eqref{:glLi}, $r(U^S_\kappa|x,x)= 1/\rho^S_\kappa$ (or $r(U^S_\kappa|x,x) > 1/\rho^S_\kappa$).
		The other direction is the same.
		Now
		the proof is done. 
	\end{proof}
	
	\section{\texorpdfstring{Proof of \cref{:gen}}{}}\label{:gen:pf}
	\begin{proof}
		[Proof of \cref{:gen}] \eqref{:gen:monotonicity}.
		Given $0 \le \kappa <\kappa' < 1$, we want to prove $\rho^L_\kappa \le \rho^L_{\kappa'}$.
		If $L=\emptyset$, then it is trivial.
		If $L=S$, then by \eqref{:rho<=1} and \eqref{:glLi}, it is done. 
		We assume that $L\neq S$ and $L\neq \varnothing$, and choose $x\in S\setminus L$.
		
		We claim that:
		$\forall z \in [1,1/\kappa')$, $U^L_\kappa(x,x|z) \le U^L_{\kappa'}(x,x|z)$.
		In fact, as $x\notin L$ and $1>\kappa' z >\kappa z$, by \cref{:P:UkappaExpansion} \eqref{:P:UkappaExpansion:general}, both $U^L_\kappa(x,x|z)$ and $U^L_{\kappa'}(x,x|z)$ can be expanded. 
		As $1 > \kappa' z > \kappa z$, 
		we have $\displaystyle\frac{1-\kappa}{1-\kappa z} \le \frac{1-\kappa'}{1-\kappa' z}$.
		Therefore the claim holds by the above mentioned expansions.
		
		By \cref{:P:GU} \eqref{step3}, we have $U^L_{\kappa'}(x,x|1/\rho^L_{\kappa'}) \le1$.
		Then as $x\notin L$ and $L\neq \varnothing$, by \cref{:P:UkappaExpansion} \eqref{:P:UkappaExpansion:infinity}, we have $1/\rho^L_{\kappa'} < 1/{\kappa'}$.
		Therefore by the above claim, $U^L_\kappa(x,x|1/\rho^L_{\kappa'}) \le U^L_{\kappa'}(x,x|1/\rho^L_{\kappa'}) \le 1$.
		By \cref{:P:GU} \eqref{step3}, $1/\rho^L_{\kappa'} \le 1/\rho^L_{\kappa}$.
		
		\eqref{:gen:continuity}.
		Firstly, we are going to prove the right continuity.
		Set $0\le\kappa<\kappa' <1$.
		By the \cref{:gen} \eqref{:gen:monotonicity}, we have $\rho^L_{\kappa} \le \rho^L_{\kappa'}$. 
		By the sandwich theorem, it suffices to show that 
		\begin{enumerate}[(a)]
			\item \label{:rhoCo:R:1}
			$\rho^L_{\kappa'} \le (\rho^L_\kappa)^S_{{\kappa_1}(\kappa')}$;
			\item \label{:rhoCo:R:2}
			$\lim_{\kappa' \downarrow \kappa} (\rho^L_\kappa)^S_{{\kappa_1}(\kappa')} = \rho^L_\kappa$,
		\end{enumerate}
		where ${\kappa_1}(\kappa') := (\kappa' - \kappa)/(1-\kappa).$
		It is clear that \eqref{:rhoCo:R:2} follows from $\lim_{\kappa' \downarrow \kappa}{\kappa_1}(\kappa') = 0$ and \eqref{:glLi}.
		Noticing $\displaystyle (P^L_\kappa)^S_{{\kappa_1}} = (P^L_{\kappa'})^{S\setminus L}_{{\kappa_1}}$,
		and \cref{:gen} \eqref{:gen:monotonicity},
		we have \eqref{:rhoCo:R:1} by $\displaystyle (\rho^L_\kappa)^S_{{\kappa_1}} 
		= (\rho^L_{\kappa'})^{S\setminus L}_{{\kappa_1}}
		\ge \rho^L_{\kappa'}$.		
		
		Secondly, we prove the left continuity.
		For $0\le\kappa''<\kappa <1$, to show $\lim_{\kappa''\uparrow \kappa} \rho^L_{\kappa''} = \rho^L_{\kappa}$,
		as $\rho^L_{\kappa''} \le \rho^L_{\kappa}$, it suffices to show that
		$  -{\kappa_2} +(1+{\kappa_2})  \rho^L_\kappa \le \rho^L_{\kappa''}$ where ${\kappa_2}:=(\kappa-\kappa'')/(1-\kappa)$.
		By \eqref{:glLi}, it suffices to prove that 
		\begin{align}
			\rho^L_\kappa \le \frac{\rho^L_{\kappa''} + {\kappa_2}}{1+{\kappa_2}} 
			= (\rho^L_{\kappa''})^S_{\frac{{\kappa_2}}{1+{\kappa_2}}},
		\end{align}
		which can be derived from
		$\displaystyle (P^L_{\kappa''})^S_{\frac{{\kappa_2}}{1+{\kappa_2}}} 
		= (P^L_{\kappa})^{S\setminus L}_{\frac{{\kappa_2}}{1+{\kappa_2}}}$
		and \cref{:gen} \eqref{:gen:monotonicity}.
		
		\eqref{:gen:1}.
		If $\rho=1$, by \cref{:gen} \eqref{:gen:monotonicity}, $\rho^L_{\kappa} \ge 1$.
		By \eqref{:rho<=1}, we have $\rho^L_{\kappa} \le 1$. We obtain the conclusion.
		If $\rho^L_{\kappa}=1$, by \eqref{:glLi} and \cref{:gen} \eqref{:gen:monotonicity}, $\kappa + (1-\kappa)\rho = \rho^S_{\kappa} \ge \rho^L_{\kappa}=1$. Hence $\rho\ge1$. 
		From \eqref{:rho<=1}, we are done.
		
		\eqref{:gen:lim1}.
		As $L\neq\varnothing$, choose $x\in L$.
		By \cref{:gen} \eqref{:gen:monotonicity} and \eqref{:rho<=1}, $\displaystyle \rho^{\{x\}}_\kappa \le \rho^L_\kappa \le 1$.
		Then by the sandwich theorem it suffices to prove that $\lim_{\kappa\uparrow1}\rho^{\{x\}}_\kappa = 1$.
		By \eqref{:rho<=1}, it suffices to show that $\forall \epsilon \in(0,1), \exists K\in [0,1), \forall \kappa\in(K,1), 1-\rho^{\{x\}}_\kappa < \epsilon$.
		By \cref{:P:GU} \eqref{step3}, $1-\rho^{\{x\}}_\kappa < \epsilon$ is equivalent to $U^{\{x\}}_\kappa(x,x|1/(1-\epsilon)) >1$.
		By \cref{:P:UkappaExpansion} \eqref{:P:UkappaExpansion:single},
		$U^{\{x\}}_\kappa(x,x|1/(1-\epsilon)) = \kappa/(1-\epsilon) + (1-\kappa) U(x,x|1/(1-\epsilon))$.
		As $\lim_{\kappa\uparrow1} U^{\{x\}}_\kappa(x,x|1/(1-\epsilon)) = 1/(1-\epsilon) >1$, we can find $K$ to obtain the desired result.
	\end{proof}
	
	\section{\texorpdfstring{Proof of \cref{:IS?} \eqref{:IS?:critical}, \eqref{:IS?:finiteIS} and \cref{:reach?} \eqref{:reach?:finiteUnder}}{}}\label{:pf1}
	\begin{lemma}\label{:singleCri:3}
		Let $0< \texttt{\#}  L <\infty$ and $0 \le \kappa_1 <\kappa_2 < 1$,
		\begin{enumerate}[(i)]
			\item \label{:singleCri:3:12}
			If $P^L_{\kappa_2}$ is rho-transient, then $\rho= \rho^L_{\kappa_1} = \rho^L_{\kappa_2}$, and $P^L_{\kappa_1}$ is rho-transient. 
			As a consequence, if $P^L_{\kappa_1}$ is rho-recurrent, then $P^L_{\kappa_2}$ is rho-recurrent.
			\item \label{:singleCri:3:3}
			If $P^L_{\kappa_1}$ is rho-recurrent and $\rho^L_{\kappa_1}<1$, then $\rho^L_{\kappa_1} < \rho^L_{\kappa_2}$.
		\end{enumerate}
	\end{lemma}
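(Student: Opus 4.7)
The plan is to exploit the series expansion of $U^L_\kappa(x,x|z)$ from \cref{:P:UkappaExpansion} \eqref{:P:UkappaExpansion:general} applied to a fixed $x \in S\setminus L$; such an $x$ exists because $L$ is finite while $S$ is countably infinite. The crucial observation will be that
\[
\frac{d}{d\kappa}\frac{1-\kappa}{1-\kappa z} = \frac{z-1}{(1-\kappa z)^2}
\]
is non-negative for $z\ge 1$ and strictly positive for $z>1$, so that the expansion yields term-by-term the comparison $U^L_{\kappa_1}(x,x|z)\le U^L_{\kappa_2}(x,x|z)$ for every $z\ge 1$ with $\kappa_2 z<1$, with strict inequality whenever $z>1$. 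To justify the strict version I will need at least one $\vec l_0\in \mathcal T(k_0,x)$ with $P(\vec l_0)>0$ and $\texttt{\#}\vec l_0(L)\ge 1$, which I will produce by concatenating a shortest positive-probability path from $x$ to some $y\in L$ with a shortest positive-probability path from $y$ back to $x$: minimality of each half rules out interior visits to $x$, so the concatenation lies in $\mathcal T(k_0,x)$ and visits $y\in L$.

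For \eqref{:singleCri:3:12} I will set $z:=1/\rho^L_{\kappa_2}$. By \eqref{:rho<=1} we have $z\ge 1$, and $\kappa_2 z<1$ because the alternative $\kappa_2 z\ge 1$ would force $U^L_{\kappa_2}(x,x|z)=\infty$ via \cref{:P:UkappaExpansion} \eqref{:P:UkappaExpansion:infinity}, contradicting \cref{:P:GU} \eqref{step3} at $z=1/\rho^L_{\kappa_2}$. The rho-transience hypothesis together with \cref{:P:U(1/rho)?1} gives $U^L_{\kappa_2}(x,x|z)<1$, and the comparison above yields $U^L_{\kappa_1}(x,x|z)<1$. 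Then \cref{:P:GU} \eqref{step1} forces $\rho^L_{\kappa_1}\le 1/z=\rho^L_{\kappa_2}$, and the reverse inequality from \cref{:gen} \eqref{:gen:monotonicity} produces equality. The same estimate exhibits rho-transience of $P^L_{\kappa_1}$, and specializing to $\kappa_1=0$ gives $\rho=\rho^L_{\kappa_2}$. The consequence in the statement is then the contrapositive of what has just been proved.

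For \eqref{:singleCri:3:3} I will argue by contradiction: assuming $\rho^L_{\kappa_1}=\rho^L_{\kappa_2}$ and setting $z:=1/\rho^L_{\kappa_1}>1$, \cref{:P:U(1/rho)?1} gives $U^L_{\kappa_1}(x,x|z)=1$. The strict form of the comparison, now applicable since $z>1$ and $\kappa_2 z<1$ is verified exactly as in the previous paragraph, would give $U^L_{\kappa_2}(x,x|z)>1$; but $z=1/\rho^L_{\kappa_2}$, contradicting \cref{:P:GU} \eqref{step3}.

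The main obstacle will be confirming strict termwise monotonicity of $U^L_\kappa(x,x|z)$ in $\kappa$ at $z>1$: once the positive-probability first-return excursion from $x$ through a point of $L$ is in hand, the strict inequality falls out of the expansion, and everything else reduces to the dictionaries among the U-function, the Green function, and the spectral radius collected in \cref{:P:GU}, \cref{:P:U(1/rho)?1}, \cref{:P:UkappaExpansion}, and \cref{:gen}.
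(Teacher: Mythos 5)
Your treatment of part \eqref{:singleCri:3:3} is essentially sound and takes a slightly different route from the paper: you work with the general expansion of \cref{:P:UkappaExpansion} \eqref{:P:UkappaExpansion:general} at a base point $x\in S\setminus L$ and extract strictness from one positive-probability excursion through $L$, whereas the paper reduces to iterated single-point lazifications and uses the affine identity $U^{\{x\}}_\kappa(x,x|z)=\kappa z+(1-\kappa)U(x,x|z)$ at $x\in L$. Both work for that part.

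Part \eqref{:singleCri:3:12}, however, has a genuine gap. Your termwise comparison goes in the direction $U^L_{\kappa_1}(x,x|z)\le U^L_{\kappa_2}(x,x|z)$ for $z\ge 1$, so from $U^L_{\kappa_2}(x,x|1/\rho^L_{\kappa_2})<1$ you can only conclude $1/\rho^L_{\kappa_2}\le 1/\rho^L_{\kappa_1}$, i.e.\ $\rho^L_{\kappa_1}\le\rho^L_{\kappa_2}$ --- which is exactly what \cref{:gen} \eqref{:gen:monotonicity} already says. The ``reverse inequality'' you invoke from \cref{:gen} \eqref{:gen:monotonicity} is not the reverse at all: monotonicity gives $\rho^L_{\kappa_1}\le\rho^L_{\kappa_2}$, the same direction, so no equality is produced. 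The actual content of \eqref{:singleCri:3:12} is the hard inequality $\rho^L_{\kappa_2}\le\rho^L_{\kappa_1}$, and a comparison in which increasing $\kappa$ increases $U$ can never deliver it. This also sinks the remaining claims of that part: without $\rho^L_{\kappa_1}=\rho^L_{\kappa_2}$, the bound $U^L_{\kappa_1}(x,x|1/\rho^L_{\kappa_2})<1$ says nothing about $U^L_{\kappa_1}(x,x|1/\rho^L_{\kappa_1})$ (the latter is evaluated at a possibly larger argument), so rho-transience of $P^L_{\kappa_1}$ does not follow either. The missing ingredient is the one the paper uses: for $x\in L$ the single-point identity of \cref{:P:UkappaExpansion} \eqref{:P:UkappaExpansion:single} makes $r(U^{\{x\}}_\kappa|x,x)$ independent of $\kappa$ (\cref{:P:rU=rU}), and then \cref{:P:rhoTransient} applied to the rho-transient $P^{\{x\}}_{\kappa_2}$ together with \eqref{:P:rG<=rU} squeezes
\begin{align}
r(U|x,x)\ \ge\ 1/\rho\ \ge\ 1/\rho^{\{x\}}_{\kappa_1}\ \ge\ 1/\rho^{\{x\}}_{\kappa_2}\ =\ r(U^{\{x\}}_{\kappa_2}|x,x)\ =\ r(U|x,x)
\end{align}
into a chain of equalities; the general finite $L$ is then handled by writing $P^L_{\kappa_2}$ as an iteration of single-point lazifications of $P^L_{\kappa_1}$. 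Nothing in your expansion at $x\notin L$ substitutes for this invariance of the U-function's convergence radius.
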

	\begin{proof}
		We first show \eqref{:singleCri:3:12} under the condition that $L=\{x\}$.
		As $P^{\{x\}}_{\kappa_2}$ is rho-transient,
		by \eqref{:P:rG<=rU}, \cref{:gen} \eqref{:gen:monotonicity}, \cref{:P:rhoTransient} and \cref{:P:rU=rU}, we have
		\begin{align}
			r(U|x,x) \ge 1/\rho\ge 1/\rho^{\{x\}}_{\kappa_1} \ge 1/\rho^{\{x\}}_{\kappa_2} = r(U^{\{x\}}_{\kappa_2}|x,x)= r(U|x,x).
		\end{align}
		Then $\rho= \rho^{\{x\}}_{\kappa_1} = \rho^{\{x\}}_{\kappa_2}$.
		Thus, by using \cref{:P:GU} \eqref{step3} and \cref{:P:UkappaExpansion} \eqref{:P:UkappaExpansion:single}, we have that
		\begin{align}\label{:singleCri:3:expand}
			1 
			\ge U^{\{x\}}_{\kappa_1}(x,x|1/\rho^{\{x\}}_{\kappa_1})
			= U^{\{x\}}_{\kappa_1}(x,x|1/\rho^{\{x\}}_{\kappa_2}) 
			= \kappa_1 /\rho^{\{x\}}_{\kappa_2} + (1-\kappa_1) U(x,x|1/\rho^{\{x\}}_{\kappa_2}
			).
		\end{align}
		By \eqref{:rho<=1}, $1/\rho^{\{x\}}_{\kappa_2} \ge 1$.
		Then by \eqref{:singleCri:3:expand}, $U(x,x|1/\rho^{\{x\}}_{\kappa_2}) \le 1$.
		Thus, by \cref{:P:UkappaExpansion} \eqref{:P:UkappaExpansion:single} and \cref{:P:U(1/rho)?1}, we have that
		\begin{align}
			U^{\{x\}}_{\kappa_1}(x,x|1/\rho^{\{x\}}_{\kappa_1})
			\le \kappa_2 /\rho^{\{x\}}_{\kappa_2} + (1-\kappa_2) U(x,x|1/\rho^{\{x\}}_{\kappa_2})
			= U^{\{x\}}_{\kappa_2}(x,x|1/\rho^{\{x\}}_{\kappa_2}) < 1.
		\end{align}
		Then by \cref{:P:U(1/rho)?1}, $P^{\{x\}}_{\kappa_1}$ is also rho-transient as desired.
		
		We then show \eqref{:singleCri:3:3} under the condition that $L=\{x\}$.
		Using \cref{:P:UkappaExpansion} \eqref{:P:UkappaExpansion:single} twice and \cref{:P:U(1/rho)?1},
		\begin{align}
			U^{\{x\}}_{\kappa_2}(x,x|1/\rho^{\{x\}}_{\kappa_1}) 
			= \frac{\kappa_2-\kappa_1}{1-\kappa_1} / \rho^{\{x\}}_{\kappa_1} + \frac{1-\kappa_2}{1-\kappa_1} U^{\{x\}}_{\kappa_1}(x,x|1/\rho^{\{x\}}_{\kappa_1})
			= \frac{\kappa_2-\kappa_1}{1-\kappa_1} / \rho^{\{x\}}_{\kappa_1} + \frac{1-\kappa_2}{1-\kappa_1}.
		\end{align}
		By $\frac{1-\kappa_2}{1-\kappa_1} \in (0,1)$ and $1/\rho^{\{x\}}_{\kappa_1} >1$, we know that $U^{\{x\}}_{\kappa_2}(x,x|1/\rho^{\{x\}}_{\kappa_1}) >1$.
		By \cref{:P:GU} \eqref{step3}, we know that $1/\rho^{\{x\}}_{\kappa_1} > 1/\rho^{\{x\}}_{\kappa_2}$ as desired.
		
		Finally, let us show both \eqref{:singleCri:3:12} and \eqref{:singleCri:3:3} when $L=\{x_1,\cdots,x_m\}$ with $m\ge2$.
		In this case
		$\displaystyle P^L_{\kappa_2} = (\cdots((P^L_{\kappa_1})^{\{x_1\}}_\kappa)^{\{x_2\}}_\kappa\cdots)^{\{x_m\}}_\kappa$
		with $\kappa = (\kappa_2-\kappa_1)/(1-\kappa_1)$.
		It is clear that we can obtain the desired result of \eqref{:singleCri:3:12} by induction.
		For \eqref{:singleCri:3:3}, by what we have already proved and \cref{:gen} \eqref{:gen:monotonicity}, we have $\displaystyle \rho^L_{\kappa_1} < (\rho^L_{\kappa_1})^{\{x_1\}}_\kappa \le \rho^L_{\kappa_2}$ as desired.
	\end{proof}
	
	\begin{proof}[Proof of \cref{:IS?} \eqref{:IS?:critical}]
		{\it Step 1.}
		We only have to consider the existence part since the uniqueness is trivial.
		Define $\kappa_c(L) := \sup \{ \kappa\in[0,1): P^{L}_\kappa \text{ is rho-transient} \}$.
		As $P$ is rho-transient, we know $\kappa_c(L)\in[0,1]$.
		By \cref{:singleCri:3} \eqref{:singleCri:3:12}, if $\kappa_c(L)>0$ and $\kappa \in [0,\kappa_c(L))$, then $P^L_\kappa$ is rho-transient and $\rho = \rho^L_\kappa$;
		if $\kappa_c(L)<1$ and $\kappa \in (\kappa_c(L),1)$, then $P^L_\kappa$ is rho-recurrent. 
		Also in the latter case, since $\rho^L_\kappa<1$, by \cref{:gen} \eqref{:gen:1}	and \cref{:singleCri:3} \eqref{:singleCri:3:3}, 
		we have that $\rho^L_\kappa$ increases strictly in $\kappa\in[\kappa_c(L),1)$.
		
		{\it Step 2.}
		Let us prove that $\kappa_c(L)<1$ and $\rho^L_{\kappa_c(L)} = \rho$.
		For the sake of contradiction, 
		assume that $\kappa_c(L)=1$.
		Then by Step 1, for $\kappa\in[0,1)$, $\rho^L_\kappa=\rho$.
		As $\rho<1$, it contradicts \cref{:gen} \eqref{:gen:lim1}.
		Hence $\kappa_c(L)<1$.
		Now by \cref{:gen} \eqref{:gen:continuity} and Step 1, we have $\rho^L_{\kappa_c(L)} = \rho$.
		
		{\it Step 3.}
		Let us show that $P^L_{\kappa_c(L)}$ is rho-recurrent and $\kappa_c(L)>0$ when $L=\{x\}$.
		By \cref{:P:UkappaExpansion} \eqref{:P:UkappaExpansion:single}, 
		\begin{align}\label{:IS?:critical:pf:twoKappa}
			U^{\{x\}}_\kappa(x,x|1/\rho^{\{x\}}_\kappa) = \kappa/\rho^{\{x\}}_\kappa + (1-\kappa)U(x,x|1/\rho^{\{x\}}_\kappa), \quad \kappa\in[0,1).
		\end{align}
		It can be verified that the right hand side of \eqref{:IS?:critical:pf:twoKappa} is continuous in $\kappa$ by \cref{:gen} \eqref{:gen:monotonicity}, \eqref{:gen:continuity}, \cref{:P:U(1/rho)?1}, and the monotone convergence theorem.
		Hence, by Step 1, 2 and \cref{:P:U(1/rho)?1}, 
		$P^{\{x\}}_{\kappa_c({\{x\}})}$ is rho-recurrent.
		Therefore, $\kappa_c({\{x\}}) >0$ because otherwise it would contradict the condition that $P$ is rho-transient.
		
		{\it Step 4.}
		Let us show that $\kappa_c(L)>0$ when $L=\{x_1,\cdots,x_m\}$ with $m\ge 2$.
		Note that $\displaystyle P^L_{\kappa} = (\cdots((P)^{\{x_1\}}_\kappa)^{\{x_2\}}_\kappa\cdots)^{\{x_m\}}_\kappa$.
		By Step 1, 3 and \cref{:gen} \eqref{:gen:1}, we know that for any $x\in S$,
		\begin{statement}\label{:IS?:critical:pf:genQ}
			if $Q$ is a rho-transient irreducible transition probability on $S$ with spectral radius $<1$, then so is $Q^{\{x\}}_\kappa$ for some $\kappa\in(0,1)$.
		\end{statement}
		Now repeating using this, we can verify that 
		$\displaystyle (\cdots((P)^{\{x_1\}}_{\kappa_1})^{\{x_2\}}_{\kappa_2}\cdots)^{\{x_m\}}_{\kappa_m}$ is rho-transient for some $\kappa_1, \kappa_2, \cdots, \kappa_m \in (0,1)$.
		From \cref{:singleCri:3} \eqref{:singleCri:3:12}, we can verify that 
		$P^L_{\min\{\kappa_i: i=1,\cdots,m\}}$
		is rho-transient.
		Then $\kappa_c(L) \ge \min\{\kappa_i : i =1 ,\cdots, m\} > 0$ as desired.
		
		{\it Step 5.}
		Finally, let us show that $P^L_{\kappa_c(L)}$ is rho-recurrent when $L=\{x_1,\cdots,x_m\}$ with $m\ge 2$. 
		For the sake of contradiction, let us assume that $P^L_{\kappa_c(L)}$ is rho-transient.
		By Step 2, we know that $\rho^L_{\kappa_c(L)} = \rho <1$.
		From Steps 1,4 and \cref{:gen} \eqref{:gen:1}, we have that \eqref{:IS?:critical:pf:genQ} holds with $\{x\}$ being replaced by $L$. 
		Applying this to $P^L_{\kappa_c(L)}$, we know that $(P^L_{\kappa_c(L)})^L_\kappa$ is rho-transient for some $\kappa\in(0,1)$.
		This contradicts how $\kappa_c(L)$ is defined in Step 1.
		We are done.
	\end{proof}
	
	\begin{proof}[Proof of \cref{:IS?} \eqref{:IS?:finiteIS}]
		By $\rho<1$ and \cref{:gen} \eqref{:gen:1}, 
		$\rho^L_{\kappa}<1$ 
		for every $\kappa \in[0,1)$.
		As $P$ is rho-recurrent, by \cref{:singleCri:3} \eqref{:singleCri:3:12}, $P^L_\kappa$ is rho-recurrent for every $\kappa \in[0,1)$.
		Now by \cref{:singleCri:3} \eqref{:singleCri:3:3}, $\displaystyle \rho^L_\kappa < \rho^L_{\kappa'}$ for every $0\le\kappa<\kappa'<1$.
	\end{proof}
	
	\begin{proof}[Proof of \cref{:reach?} \eqref{:reach?:finiteUnder}]
		If 
		$L = \varnothing$ 
		, then $\rho^L_\kappa = \rho < \rho^S_\kappa$ by \eqref{:glLi}.
		If 
		$L \neq \varnothing$ 
		and $P^L_\kappa$ is rho-recurrent, then by taking $x\in S\setminus L$, \cref{:IS?} \eqref{:IS?:finiteIS} and \cref{:gen} \eqref{:gen:monotonicity},
		$\displaystyle \rho^L_\kappa < (\rho^L_\kappa)^{\{x\}}_\kappa \le \rho^S_\kappa$.
		If 
		$L \neq \varnothing$ 
		and $P^L_\kappa$ is rho-transient, then, by \cref{:singleCri:3} \eqref{:singleCri:3:12} and \eqref{:glLi}, $\rho^L_\kappa = \rho < \rho^S_\kappa$.
	\end{proof}
	
	\section{\texorpdfstring{Proof of \cref{:twoRhoRecurrences}, \cref{:reach?} \eqref{:reach?:invariant}, \eqref{:reach?:strictlyRhoRecurrentUnder} and \cref{:IS?} \eqref{:IS?:comFiniteIS}}{}}\label{:pf2}
	
	\begin{proof}[Proof of \cref{:twoRhoRecurrences}]
		{\it Step 1.} Assuming that $P$ is rho-recurrent, let us prove that $P^L_\kappa$ is strictly rho-recurrent for every $\kappa\in(0,1)$.
		By \cref{:singleCri:3} \eqref{:singleCri:3:12}, $P^L_\kappa$ is rho-recurrent. 
		Suppose that $P^L_\kappa$ is not strictly rho-recurrent.
		Thus for arbitrarily fixed $x\in L$, we have
		\begin{align}\label{:comFiReach:globallyUniformlyLazyMaintenance:=}
			r(U^L_\kappa|x,x) = 1/\rho^L_\kappa.
		\end{align}
		As $\kappa>0$, there exists $\kappa', \kappa_1 \in (0,\kappa)$ such that 
		$\displaystyle (( P^{L\setminus\{x\}}_{\kappa} )^{\{x\}}_{\kappa'})^{\{x\}}_{\kappa_1} = P^L_\kappa$.
		Let $\displaystyle P_{(1)}: = ( P^{L\setminus\{x\}}_{\kappa} )^{\{x\}}_{\kappa'}$.
		As $P$ is rho-recurrent, by \cref{:singleCri:3} \eqref{:singleCri:3:12}, $P_{(1)}$ is rho-recurrent.
		Denote by $\rho_{(1)}, U_{(1)}$ and $r(U_{(1)}|x,x)$ the spectral radius, U-function, and the convergence radius of the U-function of $P_{(1)}$, receptively.
		Then by \eqref{:P:rG<=rU}, \cref{:P:rU=rU}, \eqref{:comFiReach:globallyUniformlyLazyMaintenance:=} and \cref{:gen} \eqref{:gen:monotonicity}, 
		$1/\rho_{(1)} \le r(U_{(1)}|x,x) = r(U^L_\kappa|x,x) = 1/\rho^L_\kappa \le 1/\rho_{(1)}$.
		Thus we have $1/\rho_{(1)} = 1/\rho^L_\kappa$.
		Since $P^L_\kappa$ is rho-recurrent and $\displaystyle (P_{(1)})^{\{x\}}_{\kappa_1} = P^L_{\kappa}$, by \cref{:P:U(1/rho)?1} and \cref{:P:UkappaExpansion} \eqref{:P:UkappaExpansion:single}, 
		\begin{align}
			1 = U^L_\kappa(x,x|1/\rho^L_\kappa) = (U_{(1)})^{\{x\}}_{\kappa_1}(x,x|1/\rho^L_\kappa) = \kappa_1 /\rho^L_\kappa + (1-\kappa_1) U_{(1)}(x,x|1/\rho^L_\kappa)
		\end{align}
		where we denote by $\displaystyle (U_{(1)})^{\{x\}}_{\kappa_1}$ the U-function of the $(\{x\},\kappa_1)$-lazy version of $P_{(1)}$.
		By $\rho<1$ and \cref{:gen} \eqref{:gen:1}, $1/\rho^L_\kappa>1$.
		Together with $\kappa_1\in(0,1)$, we have $1 > U_{(1)}(x,x|1/\rho^L_\kappa) = U_{(1)}(x,x|1/\rho_{(1)})$.
		Now by \cref{:P:U(1/rho)?1}, $P_{(1)}$ is rho-transient, which is a contradiction.
		
		{\it Step 2.}
		Suppose that $P$ is rho-transient.
		By \cref{:IS?} \eqref{:IS?:critical}, $P^L_{\kappa_c(L)}$ is rho-recurrent.
		Replacing $P$ by $P^L_{\kappa_c(L)}$ in Step 1, we can verify that $P^L_\kappa$ is strictly rho-recurrent for $\kappa\in(\kappa_c(L),1)$.
		
		{\it Step 3.} 
		It remains to show that $P^L_{\kappa_c(L)}$ is critically rho-recurrent when $P$ is rho-transient.
		As $P^L_{\kappa_c(L)}$ is rho-recurrent (\cref{:IS?} \eqref{:IS?:critical}), we assume for the sake of contradiction that $P^L_{\kappa_c(L)}$ is strictly rho-recurrent.
		Then by \eqref{:P:SCR>}, there exists $x\in S$ such that 
		\begin{equation}\label{:twoRhoRecurrences:pf:rho2<rho}
			r(U^L_{\kappa_c(L)}|x,x) > 1/\rho^L_{\kappa_c(L)}.
		\end{equation}
		
		Case a): $P^L_{\kappa_c(L)}(x,x)>0$. 
		In this case, for every $\kappa_2\in[0,P^L_{\kappa_c(L)}(x,x)]$, observe that there exists transition probability $\tilde P(\kappa_2)$ such that $\displaystyle (\tilde P(\kappa_2))^{\{x\}}_{\kappa_2} = P^L_{\kappa_c(L)}$.
		Denote by $\tilde \rho(\kappa_2), \tilde U(\kappa_2)$ and $r(\tilde U(\kappa_2)|x,x)$ the spectral radius, the U-function, and the convergence radius of the U-function of $\tilde P(\kappa_2)$, respectively. 
		By \cref{:P:rU=rU}, $r(\tilde U(\kappa_2)|x,x) = r(U^L_{\kappa_c(L)}|x,x)$.
		Note that 
		$$\tilde P(\kappa_2) = (\tilde P(P^L_{\kappa_c(L)}(x,x)))^{\{x\}}_{\frac{P^L_{\kappa_c(L)}(x,x)-\kappa_2}{1-\kappa_2}}.$$
		Therefore, by \cref{:gen} \eqref{:gen:continuity}, $1/\tilde \rho(\kappa_2)$ is continuous in $\kappa_2$.
		As $\tilde \rho(0) = \rho^L_{\kappa_c(L)}$ and \eqref{:twoRhoRecurrences:pf:rho2<rho},
		we can choose some $\kappa_2\in(0,P^L_{\kappa_c(L)}(x,x))$ small enough, such that $1/\tilde \rho(\kappa_2) < r(\tilde U(\kappa_2)|x,x)$.
		Thus by \cref{:P:rhoTransient}, $\tilde P(\kappa_2)$ is rho-recurrent.
		As $\rho<1$, by \cref{:gen} \eqref{:gen:1}, 
		$\tilde \rho(\kappa_2)<1$. 
		Then by \cref{:singleCri:3} \eqref{:singleCri:3:3} and \cref{:IS?} \eqref{:IS?:critical},
		\begin{align}\label{rho2<rho}
			\tilde \rho(\kappa_2)<\rho^L_{\kappa_c(L)} = \rho.
		\end{align}
		
		This leads to the following contradiction.
		Observe that $\tilde P(\kappa_2)(y,y)>0$ for every $y\in L$ 
		(If $x\neq y$, then $\tilde P(\kappa_2)(y,y) = P^L_{\kappa_c(L)}(y,y)>0$; 
		If $x=y$, as $\kappa_2<P^L_{\kappa_c(L)}(x,x)$, $\tilde P(\kappa_2)(y,y)>0$ also holds). 
		Thus there exists $\kappa_3 \in (0,1)$ and transition probability $P_{(2)}$ such that
		$\displaystyle (P_{(2)})^L_{\kappa_3} = \tilde P(\kappa_2)$.
		Let $\rho_{(2)}$ be the spectral radius of $P_{(2)}$.
		Hence by \cref{:gen} \eqref{:gen:monotonicity}, we have that 
		\begin{align}\label{rho3<=rho2}
			\rho_{(2)} \le \tilde \rho(\kappa_2).
		\end{align}
		Observe that $\displaystyle P^L_{\kappa_c(L)} = ((P_{(2)})^L_{\kappa_3})^{\{x\}}_{\kappa_2} = ((P_{(2)})^{\{x\}}_{\kappa_2})^L_{\kappa_3}$.
		Using \cref{:IS?} \eqref{:IS?:critical}, we have that $\displaystyle (P_{(2)})^{\{x\}}_{\kappa_2}$ is rho-transient and its spectral radius is $\rho$.
		Then by \cref{:singleCri:3} \eqref{:singleCri:3:12}, $\rho_{(2)}=\rho$. 
		Together with \eqref{rho2<rho} and \eqref{rho3<=rho2} forms a contradiction.
		
		Case b): $P^L_{\kappa_c(L)}(x,x)=0$.
		Fix an arbitrary $\kappa\in[0,1)$ and define $Q:=P^S_\kappa$.
		By \cref{:P:globallyUniformlyLazyMaintenance} and our assumptions about $P$ and $P^L_{\kappa_c(L)}$, $Q$ is rho-transient and $\displaystyle Q^L_{\kappa_c(L)} = (P^L_{\kappa_c(L)})^S_\kappa$ is strictly rho-recurrent.
		By \eqref{:glLi} and \cref{:P:globallyUniformlyLazyMaintenance}, we can verify that $\kappa_c(L)$ is also the critical value in \cref{:IS?} \eqref{:IS?:critical} with respect to the underlying transition probability $Q$ and lazy state $L$.
		Now since $\displaystyle Q^L_{\kappa_c(L)}(y,y)>0$ holds for every 
		$y\in S$, 
		we can argue similarly as in case a) and arrive at a contradiction.
	\end{proof}
	
	\begin{proof}[Proof of \cref{:reach?} \eqref{:reach?:invariant}]
		By \cref{:P:globallyUniformlyLazyMaintenance}, if $P$ is rho-transient or critically rho-recurrent, so is $P^S_{\kappa}$.
		Note that $\displaystyle P^S_{\kappa} = (P^L_{\kappa})^{S\setminus L}_\kappa$.
		Thus $P^S_{\kappa}$ is the $(S\setminus L,\kappa)$-lazy version of $P^L_{\kappa}$.
		For $\kappa\in(0,1)$,
		if $P^S_{\kappa}$ is rho-transient or critically rho-recurrent, then by applying \cref{:twoRhoRecurrences} and \cref{:IS?} \eqref{:IS?:critical}, we have 
		$\rho^L_{\kappa}=\rho^S_{\kappa}$.
		If $P^S_{\kappa}$ is strictly rho-recurrent, 
		then by \cref{:twoRhoRecurrences} and \cref{:IS?} \eqref{:IS?:critical} and \eqref{:IS?:finiteIS},
		we have $\rho^L_{\kappa}<\rho^S_{\kappa}$.
	\end{proof}

	\begin{proof}[Proof of \cref{:reach?} \eqref{:reach?:strictlyRhoRecurrentUnder}]
		By \cref{:P:globallyUniformlyLazyMaintenance}, $P^S_\kappa$ is strictly rho-recurrent.
		As $\texttt{\#} (S\setminus L)>0$, we can choose $x\in S\setminus L$.
		By \cref{:gen} \eqref{:gen:monotonicity}, $\displaystyle \rho^L_\kappa \le \rho^{S \setminus \{x\}}_\kappa$.
		As $\displaystyle (P^{S \setminus \{x\}}_\kappa)^{\{x\}}_\kappa = P^S_\kappa$,
		by \cref{:twoRhoRecurrences} and \cref{:IS?} \eqref{:IS?:critical} and \eqref{:IS?:finiteIS}, $\displaystyle \rho^{S \setminus \{x\}}_\kappa < \rho^S_\kappa$.
		We are done.
	\end{proof}
	
	\begin{proof}[Proof of \cref{:IS?} \eqref{:IS?:comFiniteIS}]
		Take $x\in L$ and $0\le \kappa_1 < \kappa_2 <1$.
		Firstly assume that $P^L_{\kappa_1}$ is rho-recurrent.
		By $\rho<1$ and \cref{:gen} \eqref{:gen:1}, we have $\rho^L_{\kappa_1}<1$.
		By \cref{:singleCri:3} \eqref{:singleCri:3:3} and \cref{:gen} \eqref{:gen:monotonicity}, we have $\displaystyle \rho^L_{\kappa_1}< (\rho^L_{\kappa_1})^{\{x\}}_{\kappa} \le \rho^L_{\kappa_2}$ where $\kappa :=\frac{\kappa_2-\kappa_1}{1-\kappa_1}$.
		Now assume that $P^L_{\kappa_1}$ is rho-transient.
		By \cref{:P:globallyUniformlyLazyMaintenance}, $(P^L_{\kappa_1})^S_{\kappa}$ is also rho-transient.
		As $\displaystyle (P^L_{\kappa_1})^S_{\kappa} = (P^L_{\kappa_2})^{S\setminus L}_{\kappa}$, by \eqref{:glLi}, $\displaystyle (P^L_{\kappa_2})^{S\setminus L}_{\kappa}$ is rho-transient and $\displaystyle \rho^L_{\kappa_1} < (\rho^L_{\kappa_1})^S_{\kappa} = (\rho^L_{\kappa_2})^{S\setminus L}_{\kappa}$. 
		As $\texttt{\#} (S\setminus L) <\infty$, by \cref{:singleCri:3} \eqref{:singleCri:3:12}, $\displaystyle (\rho^L_{\kappa_2})^{S\setminus L}_{\kappa} = \rho^L_{\kappa_2}$.
		We are done. 
	\end{proof}

\begin{bibdiv}
	\begin{biblist} 

\bib{FW21}{article}{
	author={Fried, S.},
	title={On the $\alpha$-lazy version of Markov chains in estimation and testing problems},
	journal={Stat. Inference Stoch. Process.},
	pages={1--23},
	date={2022},
}

\bib{H16}{book}{
	author={Hermon, J.},
	title={Maximal Inequalities and Mixing Times},
	note={Thesis (Ph.D.)--University of California, Berkeley},
	publisher={ProQuest LLC, Ann Arbor, MI},
	date={2016},
	pages={70},
	isbn={978-1369-84754-3},
	review={\MR{3697614}},
}

\bib{K76}{article}{
	author={Kersting, G.},
	title={A note on $R$-recurrence of Markov chains},
	journal={Z. Wahrscheinlichkeitstheorie und Verw. Gebiete},
	volume={35},
	date={1976},
	number={4},
	pages={355--358},
	review={\MR{407992}},
}

\bib{LP17}{book}{
	author={Levin, D. A.},
	author={Peres, Y.},
	title={Markov chains and mixing times},
	publisher={American Mathematical Society, Providence, RI},
	date={2017},
	pages={xvi+447},
	isbn={978-1-4704-2962-1},
	review={\MR{3726904}},
}

\bib{MT06}{article}{
	author={Montenegro, R.},
	author={Tetali, P.},
	title={Mathematical aspects of mixing times in Markov chains},
	journal={Found. Trends Theor. Comput. Sci.},
	volume={1},
	date={2006},
	number={3},
	pages={x+121},
	issn={1551-305X},
	review={\MR{2341319}},
}

\bib{N04}{book}{
	author={Nummelin, E.},
	title={General irreducible Markov chains and nonnegative operators},
	series={Cambridge Tracts in Mathematics},
	volume={83},
	publisher={Cambridge University Press, Cambridge},
	date={1984},
	pages={xi+156},
	isbn={0-521-25005-6},
	review={\MR{776608}},
}

\bib{P64}{article}{
	author={Pruitt, W. E.},
	title={Eigenvalues of non-negative matrices},
	journal={Ann. Math. Statist.},
	volume={35},
	date={1964},
	pages={1797--1800},
	issn={0003-4851},
	review={\MR{168579}},
}

\bib{P65}{article}{
	author={Pruitt, W. E.},
	title={Strong ratio limit property for $R$-recurrent Markov chains},
	journal={Proc. Amer. Math. Soc.},
	volume={16},
	date={1965},
	pages={196--200},
	issn={0002-9939},
	review={\MR{174089}},
}

\bib{V62}{article}{
	author={Vere-Jones, D.},
	title={Geometric ergodicity in denumerable Markov chains},
	journal={Quart. J. Math. Oxford Ser. (2)},
	volume={13},
	date={1962},
	pages={7--28},
	issn={0033-5606},
	review={\MR{141160}},
}

\bib{V67}{article}{
	author={Vere-Jones, D.},
	title={Ergodic properties of nonnegative matrices. I},
	journal={Pacific J. Math.},
	volume={22},
	date={1967},
	pages={361--386},
	issn={0030-8730},
	review={\MR{214145}},
}

\bib{V68}{article}{
	author={Vere-Jones, D.},
	title={Ergodic properties of nonnegative matrices. II},
	journal={Pacific J. Math.},
	volume={26},
	date={1968},
	pages={601--620},
	issn={0030-8730},
	review={\MR{236745}},
}

\bib{W00}{book}{
	author={Woess, W.},
	title={Random walks on infinite graphs and groups},
	series={Cambridge Tracts in Mathematics},
	volume={138},
	publisher={Cambridge University Press, Cambridge},
	date={2000},
	pages={xii+334},
	isbn={0-521-55292-3},
	review={\MR{1743100}},
}
		\end{biblist}
\end{bibdiv}
	
\end{document}